\newcommand\blue[1]{{\color{blue}#1}}
\definecolor{immi}{rgb}{0,.6,.1}
\long\def\change#1{{#1}}
\newbox\removebox
\newcommand\remove[2]{%
\setbox\removebox=\ifmmode\hbox{$#2$}\else\hbox{#2}\fi%
\leavevmode
\rlap{\textcolor{#1}{\vrule height0.8ex depth-0.6ex width\wd\removebox}}%
\box\removebox
}
\long\def\bigremove#1{%
\par\setbox\removebox=\vbox{#1}%
\vbox{%
\vbox to0pt{\hbox{\tikz\draw[color=blue,thick] (0,0) -- (\wd\removebox,-\ht\removebox)  (\wd\removebox,0) -- (0,-\ht\removebox);}}
\box\removebox
}
}
\def\RFss@@#1{\RF^*_{\!*#1}}
\def\RFss@_#1{\RFss@@{,#1}}
\def\RFss{\@ifnextchar_{\RFss@}{\RFss@@{}}}
\def\GL{\mathrm{GL}}
\newcommand{\RF}{{\rm RF}}
\def\Supp{{\operatorname{Supp}}}
\def\lct{\operatorname{lct}}
\def\Tr{\operatorname{Tr}}
\def\Div{\operatorname{Div}}
\def\Sing{\operatorname{Sing}}
\def\Crit{\operatorname{Crit}}
\def\ac{{\overline{\rm ac}}}
\def\GL{\operatorname{GL}}
\def\Conv{\operatorname{Conv}}
\def\11{{\mathbf 1}}
\def\AA{{\mathbb A}}
\def\CC{{\mathbb C}}
\def\FF{{\mathbb F}}
\def\NN{{\mathbb N}}
\def\QQ{{\mathbb Q}}
\def\RR{{\mathbb R}}
\def\TT{{\mathbb T}}
\def\ZZ{{\mathbb Z}}
\def\cB{{\mathcal B}}
\def\cM{{\mathcal M}}
\def\cN{{\mathcal N}}
\def\cO{{\mathcal O}}
\def\cZ{{\mathcal Z}}
\def\llp{\mathopen{(\!(}}
\def\llb{\mathopen{[\![}}
\def\rrp{\mathopen{)\!)}}
\def\rrb{\mathopen{]\!]}}
\def\Coeff{\operatorname{Coeff}}
\newcommand{\grad}{\operatorname{grad}}
\newcommand{\spec}{\operatorname{Spec}}
\def\llp{\mathopen{(\!(}}
\def\llb{\mathopen{[\![}}
\def\rrp{\mathopen{)\!)}}
\def\rrb{\mathopen{]\!]}}
\newtheorem{thm}[subsection]{Theorem}
\newtheorem{lem}[subsection]
{Lemma}
\newtheorem{cor}[subsection]
{Corollary}
\newtheorem{prop}[subsection]
{Proposition}
\newtheorem{conj}
{Conjecture}
{Problem}
\theoremstyle{definition}
{Definition}
{Notation}
{Example}
\theoremstyle{remark}
\newtheorem{remark}[subsection]
{Remark}
\newtheorem{rem}[subsection]
{Remark}
{Remarks}
\theoremstyle{plain}
\DeclareMathOperator*{\Spec}{Spec}
\newcommand{\ord}{\operatorname{ord}}
\newcommand{\abs}[1]{\lvert#1\rvert}
\def\cO{\mathcal{O}}
\renewcommand{\phi}{\varphi}
\renewcommand{\epsilon}{\varepsilon}
\renewcommand{\theta}{\vartheta}
\renewcommand{\and}{ \quad \text{and} \quad }
\begin{document}

\setcounter{tocdepth}{1} 

\author[Raf Cluckers]
{Raf Cluckers}
\address{Univ.~Lille, CNRS, UMR 8524 - Laboratoire Paul Painlevé, F-59000 Lille, France, and,
KU Leuven, Department of Mathematics, B-3001 Leu\-ven, Bel\-gium}
\email{Raf.Cluckers@univ-lille.fr}
\urladdr{http://rcluckers.perso.math.cnrs.fr/}

\author[Kien~H.~Nguyen]
{Kien Huu Nguyen}
\address{KU Leuven, Department of Mathematics,
Celestijnenlaan 200B, B-3001 Leu\-ven, Bel\-gium, and, Normandie Université, Université de Caen Normandie - CNRS, Laboratoire de Mathématiques Nicolas Oresme (LMNO),UMR 6139, 14000 Caen, France, and,
Thang Long Institute of Mathematics and Applied Sciences, Hanoi, Vietnam
}
\email{kien.nguyenhuu@kuleuven.be, huu-kien.nguyen@unicaen.fr}
\urladdr{https://sites.google.com/site/nguyenkienmath/home}

\thanks{The authors would like to thank T.~Browning, T.~Cochrane, J.~Denef, N.~Katz, E.~Kowalski, F.~Loeser, M.~Musta\c{t}\u{a}, and J.~Wright for inspiring discussions on the topics of this paper. The authors 
are partially supported by the European Research Council under the European Community's Seventh Framework Programme (FP7/2007-2013) with ERC Grant Agreement nr. 615722
MOTMELSUM, by the Labex CEMPI  (ANR-11-LABX-0007-01), and by KU Leuven IF  C16/23/010. K.H.N. is partially supported by Fund for Scientific Research - Flanders (Belgium) (F.W.O.) 12X3519N, and by the Excellence Research Chair “FLCarPA: L-functions in positive characteristic and applications” financed by the Normandy Region. K.H.N. is also funded by Vingroup Joint Stock Company and supported by Vingroup Innovation Foundation (VinIF) under the project code VINIF.2021.DA00030,  and by Vietnam National Foundation for Science and Technology Development (NAFOSTED) under grant number 101.04-2019.316.}  
%
%
\subjclass[2020]{Primary 11L07; Secondary 11S40,  14E18, 03C98, 11F23}
\keywords{Bounds for exponential sums, Igusa's conjecture on exponential sums, Igusa's conjecture on monodromy, the strong monodromy conjecture, local-global principles, circle method, major arcs, minimal exponent, motivic oscillation index, log canonical threshold, Igusa's local zeta functions, motivic integration, $p$-adic integration, log resolutions}

\begin{abstract}
We combine two of Igusa's conjectures with recent semi-continuity results by Musta\c{t}\u{a} and Popa to form a new, natural conjecture about bounds for exponential sums. These bounds have a deceivingly simple and general formulation in terms of degrees and dimensions only. We provide evidence consisting partly of adaptations of already known results about Igusa's conjecture on exponential sums, but also some new evidence like for all polynomials in up to
 $4$ variables. We show that, in turn, these bounds imply consequences for Igusa's  (strong) monodromy conjecture.
 The bounds are related to estimates for major arcs appearing in the circle method for local-global principles. 
\end{abstract}

\title[Combining Igusa's conjectures with semi-continuity]{Combining Igusa's conjectures on exponential sums and monodromy with semi-continuity of the minimal exponent} 


\maketitle

\section{Introduction}\label{sec:intro}


Let $f$ be a polynomial in $n$ variables over $\ZZ$ and of degree $d>1$, and  let $s$ be the (complex affine) dimension of the critical locus of 
the degree $d$ homogeneous part 
of $f$.
The main objects of our study are the 
finite exponential sums from   (\ref{eq:fN}) and their estimates in terms of $n$, $d$, and $s$ as in Conjecture \ref{con1} below. 
For any positive integer $N$ and any complex primitive $N$-th root of unity $\xi$, consider the exponential sum
\begin{equation}\label{eq:fN}
 \sum_{x\in (\ZZ/N\ZZ)^n}  \xi^{f(x)}.
\end{equation}
When $N$ runs over the set of prime numbers, the sums from  (\ref{eq:fN}) fall under the scope of works by Grothendieck, Deligne, Katz, Laumon, and others,  building in particular on the Weil Conjectures. 
We don't pursue new results for $N$ running over the set of prime numbers. 
Instead we put forward new bounds for these sums uniformly in general $N$ with, roughly, a win of a factor
$$
N^{-(n-s)/d}
$$
on the trivial bound, see Conjecture \ref{con1} below. In this context, Birch \cite{{Birch}} proved and used bounds with exponent
$$
(n-s)/2^{d-1}(d-1)
$$
instead of our projected and stronger $(n-s)/d$, see (\ref{bound:Birch}) below.  The bounds in the conjecture look deceivingly simple, but a reduction argument to, say, the case $s=0$ turns out to be surprisingly hard in general, and moreover, the case $s=0$ for non-homogeneous $f$ is surprisingly hard as well.
As evidence for Conjecture \ref{con1} we prove an almost generic case (based on the Newton polyhedron of $f$), as well as the case with up to  $4$ variables, and, 
the cases restricted to those $N$ which are cube free and more generally 
$(d+2)$-th power free. 

Conjecture \ref{con1} combines two of Igusa's 
conjectures, namely on exponential sums  and on monodromy, and represents an update of these conjectures in line with the recently proved  semi-continuity result for the minimal exponent by Musta{\c{t}}{\v{a}} and Popa \cite{MustPopa} and the conjectured equality of the minimal exponent with the motivic oscillation index, see \cite{CMN} and Section \ref{subs:minimal:exp} below.  


\subsection{}
Let us make all this more precise, for $f$ a polynomial  over $\ZZ$ in $n$ variables.
For an integer $N>0$ and a complex primitive $N$-th root of unity $\xi$, put
\begin{equation}\label{eq:SfN}
E_f(N,\xi):= \abs{ \frac{1}{N^n }  \sum_{x\in (\ZZ/N\ZZ)^n}  \xi^{f(x)} },
\end{equation}
which is simply the complex modulus of the sum in (\ref{eq:fN}) normalized by the number of terms. 
Write $d$ for the degree of $f$ and $f_d$ for the homogeneous degree $d$ part of $f$. We assume that $d>1$.
Write $s=s(f)$ for the dimension of the critical 
locus of $f_d$, namely, of the solution set in $\CC^n$ of the equations
\begin{equation}\label{eq:grad-fd}
0 = 
\frac{\partial f_d}{ \partial x_1}(x)  =\ldots = \frac{\partial f_d}{\partial x_n}(x).
\end{equation}
Note that $0\leq s\le n-1$. Our projected bounds are as follows : 

\begin{conj}\label{con:SfN}\label{con1}
Given $f$, $n$, $s$, and $d$ as above and any $\varepsilon>0$, one has
\begin{equation}\label{bound:SfN}
E_f(N,\xi) \ll N^{-\frac{n-s}{d}+\varepsilon}.
\end{equation}
\end{conj}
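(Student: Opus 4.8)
The plan is to break $N$ into prime powers, pass to the $p$-adic side, and reduce everything to one geometric inequality about the leading form $f_d$; since the bound is only a conjecture in general, I will also indicate how the plan becomes unconditional in the Newton-nondegenerate case, for $n\le 4$, and for $(d+2)$-th power free $N$. First I would reduce to prime powers: by the Chinese Remainder Theorem $E_f(N,\xi)$ factors as $\prod_{p^m\|N}E_f(p^m,\xi_p)$ for suitable primitive $p^m$-th roots of unity $\xi_p$, so it suffices to prove $E_f(p^m,\xi)\ll_{f,\varepsilon}p^{-m(n-s)/d+\varepsilon}$ with a constant uniform in $p$ (the finitely many bad primes contributing a bounded factor absorbed by $N^{\varepsilon}$). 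Writing $\xi$ through the standard additive character $\psi$ of $\QQ_p$ gives $E_f(p^m,\xi)=\bigl|\int_{\ZZ_p^n}\psi(a\,p^{-m}f(x))\,dx\bigr|$ for a $p$-adic unit $a$, a twist harmless for the estimates. The case $m=1$ is the finite-field sum $p^{-n}\bigl|\sum_{x\in\FF_p^n}\psi(a\bar f(x))\bigr|$, and the cohomological bounds of Deligne and Katz — applicable because $\Sing$ of the projective hypersurface $\{f_d=0\}$ has dimension $s-1$ — give $\bigl|\sum_{x\in\FF_p^n}\psi(a\bar f(x))\bigr|\ll_f p^{(n+s)/2}$, whence $E_f(p,\xi)\ll_f p^{-(n-s)/2}\le p^{-(n-s)/d}$ since $d\ge 2$. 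This already settles Conjecture~\ref{con1} for squarefree $N$, and it is the engine for bounded $m$.

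Next comes the geometric core. Via resolution of singularities (Igusa, Denef) the twisted integrals $\int_{\ZZ_p^n}\psi(a p^{-m}f(x))\,dx$ are governed by the numerical data $(N_i,\nu_i)$ of a log resolution of $f$ (including its behaviour near the hyperplane at infinity), and the decay exponent they produce is a motivic oscillation index $\oi(f)$ assembled from ratios of the form $(\nu_i+1)/N_i$ over the divisors that truly contribute. The inequality to establish is $\oi(f)\ge\frac{n-s}{d}$. Here homogeneity is decisive: by Euler's identity $\sum_i x_i\,\partial_i f_d=d f_d$ the locus $\Crit(f_d)$ is a cone, so one may take a $\GM$-equivariant resolution near infinity, and the divisors over $\Crit(f_d)$ then carry numerical data pinned down by $d$ and by $\codim\Crit(f_d)=n-s$; concretely, the scaling $f_d(\lambda x)=\lambda^d f_d(x)$ turns the relevant part of the sum into $d$-th power Gauss sums over the cone, to be controlled inductively on $n$. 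Either way this is, in substance, the known lower bound $\widetilde\alpha_{f_d}\ge\frac{n-s}{d}$ for the minimal exponent of the homogeneous polynomial $f_d$ (sharp already for $f_d=x_1^d$), which is also what yields the accompanying strengthening of Igusa's monodromy conjecture.

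Granting $\oi(f)\ge(n-s)/d$, the conjecture reduces to the Igusa-type estimate $E_f(p^m,\xi)\ll_{f,\varepsilon}p^{-m\,\oi(f)+\varepsilon}$, uniform in $p$, which I would prove in the three accessible cases: (i) for $f$ non-degenerate with respect to its Newton polyhedron, by reading the poles off the explicit toric log resolution (Varchenko, Denef--Hoornaert), giving the generic case; (ii) for $n\le 4$, by invoking finiteness and classification of the pertinent singularity invariants (ascending chain condition for log canonical thresholds, explicit low-dimensional resolutions) to verify the exponential-sum bound; and (iii) for $(d+2)$-th power free $N$, i.e.\ $m\le d+1$, by an Igusa-style descent — split $\ZZ_p^n$ along the reduction mod $p$ of $\grad f$, express $E_f(p^m,\xi)$ through $E_g(p^{m'},\cdot)$ with $m'<m$ and auxiliary $g$, and iterate $O(d)$ times down to the $m=1$ bound above, the total loss being controlled by $d$ alone.

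The hard part — and the reason the full statement remains conjectural — is this last estimate in general: that $E_f(p^m,\xi)$ really decays at the rate set by $\oi(f)$ is (a sharpened form of) Igusa's conjecture on exponential sums and is open. By contrast the geometric inequality $\oi(f)\ge(n-s)/d$, the genuinely new point here, is the tractable half, and it is what carries the whole argument once one of the three special inputs above is available.
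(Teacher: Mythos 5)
The statement you set out to prove is Conjecture~\ref{con1}, and the paper does not prove it either: it is offered as a conjecture, supported by special cases. Your text is consistent with that status --- it is a programme rather than a proof, and it correctly isolates the same two halves the paper works with: the reduction via the Chinese Remainder Theorem to prime powers and the squarefree/finite-field input (Sections~\ref{sec:ev2}--\ref{sec:ev3}), the geometric inequality $\hat\alpha_f\ge (n-s)/d$ (which the paper does not prove by an Euler-identity or $\GM$-equivariant resolution argument, but simply imports from Musta\c{t}\u{a}--Popa, Proposition~\ref{lem:alpha}), and the genuinely open step, namely decay of $E_f(p^m,\xi)$ at the rate of the minimal exponent/motivic oscillation index uniformly in $p$. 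So as a reduction of the conjecture to known ingredients plus the open Igusa-type estimate, your outline matches the paper's point of view.

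Where your unconditional cases diverge from the paper, the sketches as written have gaps. For $n\le 4$ the paper does not argue via ACC for log canonical thresholds or a classification of low-dimensional resolutions; it reduces the key case $n=4$, $s=0$, $d=3$ by Weierstrass preparation and completion of the square to polynomials in fewer variables with $\hat\alpha\le 1$, where the non-rational-singularity theorem of \cite{CMN} applies (Proposition~\ref{prop:3}, Theorem~\ref{allp}); it is not clear your invariant-theoretic route yields the needed uniform-in-$p$ exponential-sum bound. For $(d+2)$-th power free $N$ the paper does not run a direct descent over $\ZZ/p^m$: it proves the bound first for $\FF_p[t]/(t^m)$, where zooming at critical points literally rewrites the sums as finite-field sums in $n(m-1)$ variables amenable to Deligne's bound (Proposition~\ref{lem:d+2:t}), and then transfers to $\ZZ/p^m$ by the motivic transfer principle of \cite{CGH4}, treating the critical modulus $m=d$ separately. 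Your ``iterate the descent $O(d)$ times with total loss controlled by $d$'' is exactly where the difficulty sits: at $m=d$ with $f$ congruent to a smooth form mod $p$ one has the exact value $E_f(p^d,\xi)=p^{-n}=p^{-mn/d}$, so there is no spare power of $p$ to absorb any loss, and a lossy iteration cannot give the $\varepsilon=0$ statement of Theorem~\ref{lem:d+2} (nor, without uniformity in $p$ of the constants, the conjecture itself). Finally, note that the identification of $\oi(f)$ with $\hat\alpha_f$ that you use freely is itself only expected in general, as the paper is careful to say in Section~\ref{subs:minimal:exp}.
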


%
%


In this context, note that Birch \cite[Lemma 5.4]{Birch} obtained the following bound, based on the very same data of $f,n,s,$ and $d$ (and, assuming $f$ to be homogeneous):
\begin{equation}\label{bound:Birch}
E_f(N,\xi) \ll N^{-\frac{n-s}{2^{d-1}(d-1)}+\varepsilon},
\end{equation}
which he used to estimate major arcs to obtain general logal-global principles (see Section \ref{sec:intro:locglob} below).

Remarkably, the weakening of Conjecture \ref{con1} with
\begin{equation}\label{bound:Kien}
(n-s)/2(d-1)+\varepsilon
\end{equation}
in the exponent of $N$ in (\ref{bound:SfN}) instead of $(n-s)/d+\varepsilon$ has just been shown in \cite{Nguyen:n/2d}, vastly improving Birch's bounds (\ref{bound:Birch}).
The case of Conjecture \ref{con1} with $d=3$ is in line with the resembling (but averaged) bounds (170) of \cite{Hooley-non}. In the one variable case, similar bounds as in (\ref{bound:SfN}) have already been studied, see e.g.~\cite{Chalk}, \cite{Hua1959} and some generalisations in \cite{CochZheng1, CochZheng2}. Knowing only $n$, $s$ and $d$, the exponent $-(n-s)/d$ is optimal in (\ref{bound:SfN}), as witnessed by $f=\sum_{i=1}^{n-s} x_i^d$ and $N=p^d$ for primes $p$, see also the example in  (\ref{eq:igusa:ex}).


\begin{remark}\label{rem:not}
The notation in (\ref{bound:SfN}) 
means that, given $f$ and $\varepsilon>0$, there is a constant $c=c(f,\varepsilon)$ 
such that, for all integers $N\geq 1$ and all primitive $N$-th roots of unity $\xi$, the value $E_f(N,\xi)$ is no larger than 
$cN^{- \frac{n-s}{d} + \varepsilon}$.
\end{remark}

\begin{remark}\label{rem:chinese}
The critical case of Conjecture \ref{con1} is with $N$ having a single prime divisor. 
Indeed, by the Chinese Remainder Theorem, if one writes $N= \prod_{i}p_i^{e_i}$ for distinct prime numbers $p_i$ and integers $e_i>0$, then one has
\begin{equation}\label{eq:p-i}
E_{f}(N,\xi) = \prod_{i} E_f(p_i^{e_i},\xi_i)
\end{equation}
for some primitive $p_i^{e_i}$-th roots of unity $\xi_i$. In detail, if one writes $1/N=\sum_i a_i/p_i^{e_i}$ with $(a_i,p_i)=1$, then one takes $\xi_i = \xi^{b_i}$ with $b_i=a_i N/p_i^{e_i}$. 
\end{remark}

\subsection{}\label{sec:introbeyond}
Conjecture \ref{con1} simplifies Igusa's original question on exponential sums (recalled in Section \ref{sec:intro:original}) to bounds involving only $n$, $d$, and $s$.
It opens a way to proceed with Igusa's conjecture on exponential sums beyond the case of non-rational singularities that is obtained recently in \cite{CMN}. 


In most of the evidence that we provide below,  one can furthermore take $\varepsilon=0$ and
one may wonder to which extent this sharpening of Conjecture \ref{con:SfN} holds. Such a sharpening with $\varepsilon=0$ goes beyond Igusa's conjectures in ways explained in Section \ref{subsection:order:poles}. One may also wonder whether the implied constant $c$ can be taken depending only on $f_d$ and $n$ (and $\varepsilon$, but not on $f$). If one excludes a finite set $S$ (depending on $f$ or just on $f_d$) of prime divisors of $N$, then it seems furthermore possible that the implied constant can be taken depending only on $d$ and $n$  (and $\varepsilon$); see Remark \ref{rem:largeprimes-n-d} for more details.

\subsection{}\label{sec:intro:monod}
In Section \ref{sec:monodromy} we relate the bounds from Conjecture \ref{con1} to Igusa's monodromy conjecture.
Conjecture \ref{con1} implies the strong monodromy conjecture for poles of local zeta functions with real part in the range strictly between $-\frac{n-s}{d}$ and zero. More precisely, we show under Conjecture \ref{con1} that  there are no poles (of a local zeta function of $f$) with real part in this range except $-1$ (see Proposition \ref{prop:mon}); from \cite{MustPopa} it follows correspondingly that there are no zeros of the Bernstein-Sato polynomial of $f$ in this range other than $-1$ (see Proposition \ref{lem:alpha}). Note that Conjecture \ref{con1} is much stronger than the strong monodromy conjecture in the mentioned range, as the latter implies merely a much weaker variant of Conjecture \ref{con1}, namely the bounds from (\ref{bound:SfNp}) instead of (\ref{bound:SfN}), where the constant $c_p$ is allowed to depend on $p$.

\subsection{}\label{sec:intro:original}
\change{Igusa's original question on exponential sums predicts upper bounds with a non-canonical exponent coming from a choice of log resolution for homogeneous $f$ (with $f=f_d$), see 
\cite{Igusa3}.  More precisely, let $h: Y\to X=\AA_\QQ^n$ be a log resolution of $D=f^{-1}(0):=\Spec(\QQ[x_1,...,x_n]/(f))$, i.e. $Y$ is an integral smooth scheme, $h:Y\rightarrow X$ is a proper map, the restriction $h:Y\backslash h^{-1}(D)\rightarrow X\backslash  D$ is an isomorphism, and $(h^{-1}(D))_{\text{red}}$ has simple normal crossings as subscheme of $Y$. Such a log-resolution exists by the work of Hironaka \cite{Hir:Res}. Write $h^{-1}(D)=\sum_{i\in I}N_iE_i$ and $\Div(h^*(dx_1\wedge...\wedge dx_n))=\sum_{i\in I}(\nu_i-1)E_i$ for irreducible components $E_i$ of $(h^{-1}(D))_{\text{red}}$ and positive integers $N_i,\nu_i$.  By blowing up further, one may suppose that $E_j\cap E_i= \emptyset$ whenever $(\nu_i, N_i)=(\nu_j,N_j)=(1,1)$ and $i\neq j$. Put
$$
J=\{i\in I|(\nu_i, N_i)\neq (1,1)\} 
$$
and
$$
\sigma_0=\sigma_0(h):=\min_{i\in J}\frac{\nu_i}{N_i}.
$$
 Note that $\sigma_0$ depends on the choice of $h$ in general. Igusa originally conjectured, for any  $\sigma<\sigma_0$,  and under a few extra conditions that are most likely superfluous (namely, that $\sigma_0>2$ and that $f$ is homogeneous), that one has a bound
\begin{equation}\label{Iguori}
E_f(N,\xi)\leq cN^{-\sigma}
\end{equation}
for all $N>0$, all primitive $N$-th roots of unity  $\xi$ and a constant $c$ independent of $N,\xi$. In the case that $\sigma_0\le 1$, the bounds (\ref{Iguori}) are proved even more generally than in Igusa's original conjecture in \cite{CMN}, see Section \ref{sec:ev2} below.  Furthermore, precisely (and only) in the case that $\sigma_0\le 1$ holds, the value $\sigma_0$ is independent of the choice of $h$, and, is called the log-canonical threshold of $f$.

\par

When one takes a fixed prime number $p$, Igusa \cite{Igusa3} proves that Inequality (\ref{Iguori}) holds for $N$ of the form $N=p^m$ with $m\geq 1$, primitive $N$-th root of unity $\xi$ and a constant $c=c_p$ depending on $p$ and $\sigma<\sigma_0$ (but not on $m,\xi$).

\par

When $f$ satisfies the non-degeneracy  condition of Section \ref{sec:CAN}, there is a toric log-resolution $h$ of $D$ related to the Newton polyhedron of $f$ at zero. In this case, $\sigma_0(h)=\sigma_f$ with $\sigma_f$ defined again from the Newton polyhedron (see Section \ref{sec:CAN} for the definition of $\sigma_f$). In \cite{DenSper}, Denef and Sperber conjectured that when $f$ is non-degenerate, one can replace Inequality (\ref{Iguori}) in Igusa's conjecture by
\begin{equation}\label{Denef-Spe}
E_f(p^m,\xi)\leq cm^{\kappa-1}p^{-\sigma_f m},
\end{equation}
where $\kappa$ is an invariant coming from the Newton polyhedron of $f$ at zero (see Section \ref{sec:CAN}) and $c$ is independent of $p,m, \xi$. Thus, the Denef-Sperber conjecture is a bit stronger than Igusa's conjecture in the case of non-degenerate polynomials, by the more explicit form of the exponents $\sigma_f$ and  $\kappa$; it has been proved and generalized in \cite{DenSper,CDenSper,CDenSperlocal,CAN}, see Proposition \ref{prop:CANsigma} below.}

 In \cite{CVeys}, some of  Igusa's original conditions, like homogeneity for $f$, were dropped with some care, namely by focusing on squareful integers, see 
Section \ref{subs:minimal:exp} 
 for more details. Igusa's condition that $\sigma_0(h)>2$ for some $h$ was already dropped before; it was more relevant for his intended application of his conjecture to local-global principles than for the content of the conjecture itself. \change{Additionally, Igusa's non-canonical exponent $\sigma_0(h)$} was replaced by canonical  candidates for the exponent: the motivic oscillation index of $f$, and, (expected to be equal) the minimal exponent of $f-v$ with $v$ a well-chosen critical value of $f:\CC^n\to\CC$, see \cite{CVeys}, \cite{CMN} and Section \ref{subs:minimal:exp} below.
Our suggested bounds encompass several issues related to the minimal exponent (and, the motivic oscillation index), 
by replacing them by the much simpler and natural value $\frac{n-s}{d}$, yielding Conjecture \ref{con1}  as new variant of (\ref{Iguori}). As an extra upshot, Conjecture \ref{con1} makes sense again for all positive integers $N$, and not only for squareful integers. 
Although the bounds from Conjecture \ref{con1} seem simple and very natural, they appear surprisingly hard to show in general, and even the much weaker bounds with constants depending on $p$ and $N$ running over powers of $p$ as in (\ref{bound:SfNp}), remain elusive in general up to date, even in the case with $s=0$.

\subsection{}
From Section \ref{sec:intro:evidence} on we develop evidence for Conjecture \ref{con1}. We first rephrase some well-known results as evidence, 
namely, Igusa's treatment of  the smooth homogeneous case (with $f=f_d$ and $s=0$), the case with degree $d=2$, the case with $(n-s)/d\le 1$, the case of at most $3$ variables, and, the case with cube free $N$.  We then generalize this further to new evidence for all $N$ which are $(d+2)$-th power free (see Section \ref{sec:HeathB}). This treatment of the $(d+2)$-th power free case is mainly provided for expository reasons, as it uses some recent results on bounds of \cite{CGH4} in the context of motivic integration and uniform $p$-adic integration as in \cite{CGH5}; it indicates that the case  $N=p^e$ with $p$ prime and $e$ small is generally more easy than with $e$ large. 

In Section \ref{sec:CAN}, we show Conjecture \ref{con:SfN} when $f$ is non-degenerate with respect to its Newton polyhedron at zero, using
recent work from \cite{CAN} and some elementary reasonings on Newton polyhedrons. This shows that Conjecture \ref{con1} holds under often generic conditions, including the generic weighted homogeneous case, 
see Remark \ref{rem:gen}.

In Section \ref{sec:n=4}, we show Conjecture \ref{con1} for all polynomial in up to $4$ variables. This uses \cite{CMN} to reduce to the case with $n=4$, $d=3$ and $s=0$. To finish this degree three case in four variables we use Weierstrass preparation and the results from \cite{CMN} and \cite{MustPopa}.

In our final Remark \ref{rem:largeprimes-n-d}, we explain that throughout the evidence for Conjecture \ref{con1} of this paper, up to excluding a finite set $S$ of primes divisors of $N$ (depending on $f$), the constant $c$ can be taken depending only on $d$, $n$ and $\varepsilon$.

Let us finally mention the further evidence of \cite{Nguyen:n/2d} for Conjecture \ref{con1}, with the weakened exponent $(n-s)/2(d-1)$ in the upper bound of (\ref{bound:SfN}) instead of $(n-s)/d$.

\subsection{}\label{sec:intro:vast}
In his vast program from \cite{Igusa3}, Igusa studies a certain ad\`elic Poisson summation formula related to $f$, inspired by Weil's work \cite{Weil} on the Hasse principle and Birch's work  \cite{Birch} on more general local-global principles.
Conjecture \ref{con:SfN}  would imply that Igusa's ad\`elic Poisson summation formula  for $f$ holds under the simple condition
\begin{equation}\label{bound:2d}
 n -s > 2d
 \end{equation}
which simplifies (and generalizes) the list of conditions put forward by Igusa in \cite{Igusa3}, and would drop in particular the condition of homogeneity on $f$.


\subsection{}\label{sec:intro:locglob}
Also for obtaining (or just for streamlining) local-global principles, Conjecture \ref{con:SfN} may play a role. 
When $f$ is homogeneous, the sums $E_f(N,\xi)$ appear for estimating the contribution of the major arcs in the circle method 
to get a local-global principle for $f$ when
\begin{equation}\label{bound:2^d}
n-s > (d-1) 2^d,
\end{equation}
in work by Birch \cite{Birch} and in the recent sharpening from \cite{BrowPren}, which both rely on Birch's bounds (\ref{bound:Birch}) quoted above. Birch shows that any homogeneous form $f=f_d$ with (\ref{bound:2^d}) and having smooth local zeros for each completion of $\QQ$ automatically has a nontrivial rational zero. One may hope one day to replace Condition (\ref{bound:2^d}) on homogeneous $f$ by (\ref{bound:2d}), which is in line with a conjectured local-global principle from \cite{Browning-HB}.
Conjecture \ref{con:SfN} would put the remaining obstacle 
completely with the estimation for the minor arcs (where actually already lie the limits of the current strategies).
Other possible applications may be for small solutions of congruences as studied in e.g.~\cite{Baker}.


%
Note that Birch's method in \cite{Birch} also helps to understand the distribution of rational points in the projective hypersurface $X$ associated with a homogeneous polynomial $f$. More precisely, the singular series
$$\mathfrak{S}(f)=\sum_{N\geq 1}N^{-n}\sum_{a\in (\ZZ/N\ZZ)^{\times}}\sum_{x\in (\ZZ/N\ZZ)^n}\exp(\frac{2\pi iaf(x)}{N})$$
 which is equal to the  product of $p$-adic densities of $f$ will contribute to the dominant term in the asymptotic formula of the number points of $X$ of bounded height. Conjecture \ref{con:SfN} implies that the singular series $\mathfrak{S}(f)$ is absolutely convergent if $n-s>2d$. Thereby Conjecture \ref{con:SfN} may be useful for the future research on the distribution of rational points in algebraic hypersurfaces.
\subsection{Generalization to a ring of integers}
Before giving precise statements and proofs, we formulate 
a natural generalization to rings of integers (a generality we will not use later in this paper).
For a ring of integers $\cO$ of a number field and a polynomial $g$ over $\cO$, one can formulate an analogous conjecture with summation sets $(\cO/I)^n$ with nonzero ideals $I$ of $\cO$ and primitive additive characters $\psi:\cO/I\to\CC^{\times}$. More precisely, let $g$ be a polynomial in $n$ variables of degree $d>1$ and with coefficients in $\cO$. For any nonzero ideal $I$ of $\cO$ and any primitive additive character $\psi:\cO/I\to\CC^{\times}$, let $N_I:=[\cO:I]$ be the absolute norm of $I$ and consider 
\begin{equation}\label{eq:SfI}
E_g(I,\psi):= \abs{ \frac{1}{N_I^n }  \sum_{x\in (\cO/I)^n}  \psi(g(x)) }.
\end{equation}
Write $s$ for the dimension of the critical 
locus of the degree $d$ homogeneous part $g_d$ of $g$.  As a generalization of the above questions, one may wonder whether
for each $\varepsilon>0$ (or more strongly with $\varepsilon=0$) one has
\begin{equation}\label{bound:SgN}
E_g(I,\psi) \ll N_I^{-\frac{n-s}{d}+\varepsilon}.
\end{equation}
As above with the Chinese remainder theorem, one can rephrase this using the finite completions of the field of fractions of $\cO$. Furthermore, one can study similar sums for the local fields $\FF_q \llp t \rrp$ (with similar methods in the large characteristic case), see e.g.~\cite[Section 2.6]{CVeys} and \cite[Section 1.2]{CMN}.






\section{Link with the monodromy conjecture}\label{sec:monodromy}

\subsection{}\label{sec:mon}
Fix a prime number $p$. For each integer $m\ge 0$ let $a_{p,m}$ be the number of solutions in $(\ZZ/p^m\ZZ)^n$ of the equation $f(x)\equiv 0\bmod p^m$, and consider the Poincar\'e series
$$
P_{f,p}(T):= \sum_{m\ge 0} \frac{a_{p,m}}{p^{mn}} T^m,
$$
in $\ZZ[[T]]$. \change{Igusa \cite{Igusa2,Igusa3} showed that $P_{f,p}(T)$ is a rational function in $T$, using a log resolution of $f^{-1}(0)$. Let $T_0$ be a complex pole of $P_{f,p}(T)$ and let $t_0$ be the real part of a complex number $s_0$ with $p^{-s_0}=T_0$. Let $h:Y\to \AA_\QQ^n$ be a log-resolution  of $f^{-1}(0)$ and $(N_i, \nu_i)_{i\in I}$ as in Section \ref{sec:intro:evidence}. Igusa \cite{Igusa3} showed that $t_0$ belongs to the set $\mathcal{P}_h=\{-\nu_i/N_i|i\in I\}$. However, $\mathcal{P}_h$ depends on the choice of log-resolution $h$.  Igusa \cite[Theorem 2]{Igusa2} also showed a strong link between exponential sums and local zeta functions  (see also \cite[Corollary 1.4.5]{DenefBour} and \cite[Proposition 2.7]{DenefVeys}), yielding the following corollary.
\begin{cor}[\cite{Igusa2}]\label{cor:Igusa2thm2}
For $p$, $f$, $T_0$ and $t_0$ as above, if $T_0$ is furthermore a pole of $(T-1/p)P_{f,p}(T)$, then
\begin{equation}\label{lowerboun.0}
p^{m t_0}\leq c'_p E_f(p^m,\xi)
\end{equation}
for infinitely many  $m$ and $\xi$ and a constant $c'_p$ independent of $m$, $\xi$.
\end{cor}
\begin{proof}
Proposition 2.7 of \cite{DenefVeys} gives finitely many complex numbers $T$, finitely many characters $\chi:\CC^\times\to\CC^\times$ of finite order, finitely many integers $b\ge 0$, and finitely many complex numbers $c$ such that for large $m$,  $E_f(p^m,\xi)$ is (the complex modulus of) a finite $\CC$-linear combination of the terms
$$
\chi(\xi)\cdot T^m m^{b}\xi^c ,
$$
where furthermore a term with $T=T_0$ appears non-trivially in this linear combination for each pole $T_0$ of $(T-p^{-1})P_{f,p}(T)$. Now the corollary follows by looking at the dominant terms, namely, with largest occurring real part of $T$ and for such $T$ the largest occurring value for $b$.
\end{proof}
}  Denef \cite{DenefBour} formulated a strong variant of Igusa's monodromy conjecture by asking whether $t_0$ as above is automatically a zero of the Bernstein-Sato polynomial of $f$. The following result addresses this question in a range of values for $t_0$, namely strictly between $-(n-s)/d$ and zero, assuming Conjecture \ref{con1} for $f$.

\begin{prop}[Strong Monodromy Conjecture, in a range]\label{prop:mon}
Let $f$, $n$, $s,$ and $d$ be as in the introduction and suppose that  Conjecture \ref{con:SfN} holds for $f$. Let $t_0$ be coming as above from a pole $T_0$ of $P_{f,p}(T)$ for a prime number $p$. Suppose that moreover $t_0>-(n-s)/d$. Then $t_0=-1$, and hence, $t_0$ is a zero of the Bernstein-Sato polynomial of $f$. 
\end{prop}
Proposition \ref{prop:mon} is a form of the strong monodromy conjecture in the range strictly between $-(n-s)/d$ and zero. We don't pursue the highest generality here, and leave the generalization for other variants of zeta functions like twisted $p$-adic local zeta (or even motivic) functions to the reader.
Proposition \ref{lem:alpha} below gives a related statement for the zeros of the Bernstein-Sato polynomial of $f$.

\begin{proof}[Proof of Proposition \ref{prop:mon}]
Let $p$ be a prime number. Let $t_0$ be the real part of a complex number $s_0$ such that $T_0:=p^{-s_0}$ is a pole of $P_{f,p}(T)$.
Suppose that  
for all $\varepsilon>0$ there exists $c_p=c_p(f,\varepsilon)$ such that
\begin{equation}\label{bound:SfNp}
E_f(p^m,\xi) \leq c_p\cdot  (p^m)^{-\frac{n-s}{d}+\varepsilon} \mbox{ for all  $m>0$ and all primitive $\xi$}.
\end{equation}
By Corollary \ref{cor:Igusa2thm2} it follows that $t_0$ either equals $-1$ \change{or, one has
\begin{equation}\label{lowerboun}
p^{mt_0}\leq c'_pE_f(p^m,\xi)
\end{equation}
for infinitely many pairs $(m,\xi)$ and a constant $c'_p$ independent of $m,\xi$.
Clearly the bound from (\ref{bound:SfNp}) holds if Conjecture \ref{con1} holds for $f$. By (\ref{bound:SfNp}) and (\ref{lowerboun}),}  if $t_0>-(n-s)/d$, then $t_0=-1$. \change{Since $f$ is non-constant}, the value $-1$ is automatically a zero of the Bernstein-Sato polynomial of $f$. This completes the proof of the proposition.
\end{proof}
Showing the bounds (\ref{bound:SfNp}) from the above proof for general $f$ 
does not seem easy, although they are much weaker (and much less useful adelically) than the bounds from (\ref{bound:SfN}), because of the dependence of $c_p$ on $p$.

In view of the strong monodromy conjecture, Proposition \ref{prop:mon} should be compared with the following absence of zeros of the Bernstein-Sato polynomial in a similar range, apart from $-1$. Recall that the zeros of the Bernstein-Sato polynomial are negative rational numbers.
\begin{prop}\label{lem:alpha}
Let $f$, $n$, $s,$ and $d$ be as in the introduction and let $r$ be any zero of the Bernstein-Sato polynomial of $f$. Then either $r=-1$, or, $r \leq - (n-s) /d$. 
\end{prop}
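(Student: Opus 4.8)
The plan is to reduce the statement to a known bound relating the \emph{minimal exponent} $\tilde\alpha_f$ (the negative of the largest root of the Bernstein--Sato polynomial $b_f(s)$) to the invariant $(n-s)/d$, together with the elementary fact that the roots of $b_f$ lie in $(-n,0)$ and that $-1$ is always a root when $d>1$. Concretely, let $r$ be a root of $b_f(s)$ with $r\neq -1$; I must show $r\le -(n-s)/d$. Since the largest root (closest to $0$) is $-\tilde\alpha_f$, it suffices to treat the case $r=-\tilde\alpha_f$, i.e.\ to show: if $\tilde\alpha_f<1$, then $\tilde\alpha_f\ge (n-s)/d$; and if $\tilde\alpha_f\ge 1$, then there are no roots strictly between $-1$ and $0$ other than possibly $-\tilde\alpha_f=-1$ — but in that last case the hypothesis $r\neq -1$ and $r>-1$ would force $r$ to be a root larger than the largest root, a contradiction. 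So everything comes down to the inequality $\tilde\alpha_f\ge (n-s)/d$ for the relevant range.

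The key input is the homogeneity/degree estimate for the minimal exponent coming from Musta\c{t}\u{a}--Popa \cite{MustPopa}: if $g$ is homogeneous of degree $d$ in $n$ variables then $\tilde\alpha_g \ge (n-s(g))/d$, where $s(g)$ is the dimension of the critical locus of $g$; moreover the minimal exponent only decreases under adding lower-order terms in a controlled way, and in fact $\tilde\alpha_f$ is governed near the origin by the leading form $f_d$ in the cases relevant here. First I would recall the precise statement from \cite{MustPopa} that bounds $\tilde\alpha_{f_d}$ from below by $(n-s)/d$ for the homogeneous polynomial $f_d$ (this is essentially their computation of the minimal exponent via the Newton filtration / the fact that $1/f_d^{(n-s)/d}$ is the critical integrability exponent along the singular locus). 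Then I would invoke the semicontinuity and the inequality $\tilde\alpha_f \ge \min_v \tilde\alpha_{f-v}$ over critical values $v$, reducing to a local computation at each critical point; localizing and passing to the tangent cone, the leading term controls the minimal exponent from below by $(n-s)/d$ as well. Combining, one gets that every root $r$ of $b_f$ with $r>-(n-s)/d$ satisfies $r\ge -\tilde\alpha_f > -(n-s)/d$ is impossible unless $r=-1$, which happens precisely when $(n-s)/d>1$; and when $(n-s)/d\le 1$ the only root in the open interval $(-(n-s)/d,0)\supseteq(-1,0)\cap(-(n-s)/d,0)$ is again $-1$ by the same minimal-exponent bound.

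The main obstacle I expect is making the passage from the global Bernstein--Sato polynomial of $f$ to the minimal exponent of the leading form $f_d$ fully rigorous: a root of $b_f(s)$ need not come from the behaviour at the origin, and $\tilde\alpha_f$ is a local invariant (a minimum over points of $\CC^n$, or over critical values after shifting). The clean way around this is to use that the largest root of $b_f$ equals $-\min_{x\in\CC^n}\tilde\alpha_{f,x}$ where $\tilde\alpha_{f,x}$ is the minimal exponent at $x$, and then to bound each $\tilde\alpha_{f,x}$ (after translating $x$ to the origin and replacing $f$ by $f - f(x)$, whose leading form is still $f_d$ up to the relevant order) below by $(n-s)/d$ using the homogeneous estimate applied to the initial form — the initial form at a singular point has its critical locus of dimension at most $s$ because degenerations can only make the singular locus larger only in a way that is already accounted for by $s=s(f_d)$; this monotonicity statement, or a direct Newton-polytope computation à la \cite{CAN}, is the technical heart. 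Once that bound on $\tilde\alpha_{f,x}$ is in hand, the conclusion $r=-1$ or $r\le -(n-s)/d$ for every root $r$ of $b_f$ is immediate.
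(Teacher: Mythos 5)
Your overall reduction is the right one, and you have correctly identified the two inputs from \cite{MustPopa} that the paper itself uses: the lower bound $(n-s)/d$ for the minimal exponent of a homogeneous polynomial (item (3) of Theorem~E) and the semicontinuity of the minimal exponent (item (2)). The gap is in how you pass from the homogeneous case to general $f$. You localize at each singular point and bound the local minimal exponent from below by that of the initial form (tangent cone), relying on the claim that ``the initial form at a singular point has its critical locus of dimension at most $s$.'' That claim is false, and the resulting bound is genuinely too weak. Take $n=4$ and $f=x_1^2x_2^2+\sum_{i=1}^4 x_i^5$, so $d=5$, $f_5$ is smooth, $s=0$, and the required bound is $4/5$. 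The initial form of $f$ at the origin is $x_1^2x_2^2$, whose critical locus has dimension $3$ and whose minimal exponent equals $1/2$; thus the (correct) tangent-cone inequality $\tilde\alpha_0(f)\ge\tilde\alpha_0(x_1^2x_2^2)$ only yields $1/2<4/5$. So the step you defer as ``the technical heart'' is not a technicality: there is no monotonicity of the critical-locus dimension (nor of the degree) under passing to initial forms, and the Newton-polyhedron fallback à la \cite{CAN} requires a non-degeneracy hypothesis that general $f$ does not satisfy.

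The repair, which is the paper's actual argument, is to apply semicontinuity not to the local degeneration to the tangent cone but to the global degeneration of $f$ to its leading form: the family $f_t(x)=t^{d}f(x/t)$ has $f_0=f_d$, while for $t\neq 0$ the polynomial $f_t$ differs from $f$ only by an invertible scaling of coordinates and a nonzero constant factor, so item (2) of Theorem~E transfers the bound from $f_d$ to $f$, and item (3) gives $\tilde\alpha(f_d)\ge (n-s)/d$ since $s=s(f_d)$ by definition. With that replacement your argument closes; note also that the detour through critical values $v$ and $\hat\alpha_f$ is unnecessary for this proposition, since the roots of $b_f$ other than $-1$ are governed by the minimal exponent of the hypersurface $f=0$ alone.
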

\begin{proof}
\change{We write $f=f_0+...+f_d$ with $f_i$ is the homogeneous part of degree $i$ of $f$. Item (3) of Theorem~E of \cite{MustPopa} states that the minimal exponent $\tilde{\alpha}_{f,0}$ of $f$ is at least $(n-s)/d$ if $f$ is homogeneous. Recall that the minimal exponent $\tilde{\alpha}_f$ of $f$ is equal to $\min_{x\in f^{-1}(0)}\tilde{\alpha}_{f,x}$. Moreover, if $\phi:\AA^n\to \AA^n$ is a linear change of variables then $\tilde{\alpha}_{f,x}=\tilde{\alpha}_{f\circ \phi, \phi^{-1}(x)}$, and, for any constant $\beta\neq 0$ one has $\tilde{\alpha}_{f,x}=\tilde{\alpha}_{\beta f,x}$. Let $g_\lambda(x)$ be $f_d+\sum_{0\leq i\leq d-1}\lambda^{d-i}f_i$.  Then for each $\lambda\neq 0$ we have  $g_\lambda(x)=\lambda^df(x/\lambda)$.  Write $X=\AA^n\times \AA^1$, $T=\AA^1$, $\pi:\AA^n\times \AA^1\to \AA^1$ for the projection, $h(x,\lambda)=g_\lambda(x)$ and $D=h^{-1}(0)$.  For each $x\in f^{-1}(0)$, we consider the section $s_x: T\to X$ with $ \lambda\mapsto (\lambda x, \lambda)$, then $s_x(\lambda)\in D_\lambda$ since $h(\lambda x, \lambda)=g_\lambda(\lambda x)=\lambda^df(\lambda x/\lambda)=0$ if $\lambda\neq 0$ and $h(0,0)=f_d(0)=0$. Now we can use item (2) of Theorem~E of \cite{MustPopa} for $X$, $T$, $\pi$, $D$ and $s_x$  to see that  for each $x\in f^{-1}(0)$ we have
$$
\tilde{\alpha}_{f,x}=\tilde{\alpha}_{g_\lambda, \lambda x}\geq \tilde{\alpha}_{g_0,0}=\tilde{\alpha}_{f_d,0}\geq (n-s)/d
$$
for all  $\lambda\neq 0$ in a small enough neighbourhood of $0$. Thus,
$$
\tilde{\alpha}_f=\min_{x\in f^{-1}(0)}\tilde{\alpha}_{f,x}\geq (n-s)/d.
$$
The proposition now follows directly from 
the definition of the minimal exponent $\tilde{\alpha}_f$ of $f$ as the smallest zero of $b_f(-s)/(s-1)$, where $b_f(s)$ is the Bernstein-Sato  polynomial of $f$.} 
\end{proof}


\subsection{}\label{subsection:order:poles}
The variant of Conjecture \ref{con:SfN}  with $\varepsilon=0$ (or even just the bounds (\ref{bound:SfNp}) with $\varepsilon=0$) implies for any pole $T_0$ of $P_{f,p}(T)$ with corresponding value $t_0$ the following bound on the order of the pole : If $t_0$ equals  $-(n-s)/d$ and  $-(n-s)/d\neq -1$, then the pole $T_0$ has multiplicity at most one, and, if $t_0=-1= -(n-s)/d$, then the pole $T_0$ has multiplicity at most two, by a similar reasoning as for Corollary \ref{cor:Igusa2thm2}.

\begin{rem}\label{rem:(5)}
Conjecture \ref{con:SfN} implies the bounds (\ref{bound:SfNp}) with moreover constants $c_p$ taken independently from $p$, and, the conjecture in turns would follow from this.
By (\ref{eq:p-i}), the variant of Conjecture \ref{con:SfN} with $\varepsilon=0$ is equivalent with the bounds (\ref{bound:SfNp}) with $\varepsilon=0$ and such that furthermore the products of the constants $c_p$ over any set $P$ of primes is bounded independently of $P$.
 \end{rem}

\subsection{}\label{subs:minimal:exp}
The minimal exponent of $f$ is defined as the \change{smallest} zero of the quotient $b_f(-s)/(s-1)$ with $b_f(s)$ the Bernstein-Sato polynomial of $f$ if such a zero exists, and it is defined as $+\infty$ otherwise. Write $\hat \alpha_f$ for the minimum of the minimal exponents of $f-v$ for  $v$ running over the (complex) critical values of $f$.
In a more canonical variant of Igusa's original question, one may wonder more technically than Conjecture \ref{con1} whether
for all $\varepsilon>0$ one has
\begin{equation}\label{bound:SfNfull}
E_f(N,\xi) \ll N^{-\hat\alpha_f+\varepsilon} \mbox{ for all $\xi$ and all squareful integers $N$},
\end{equation}
similarly as the question introduced in \cite{CVeys} for the motivic oscillation index (and where the necessity of working with squareful integers $N$ is explained). Recall that an integer $N$ is called squareful if for any prime $p$ dividing $N$ also $p^2$ divides $N$. In \cite{CAN}, \cite{Saskia-Kien}, \cite{CDenSper}, \cite{CDenSperlocal}, \cite{CMN}, \cite{DenSper}, evidence is given for this sharper but more technical question. As mentioned above, $\hat\alpha_f$ is hard to compute in general, and $(n-s)/d$ is much more transparent. However, $\hat\alpha_f$ is supposedly equal to the motivic oscillation index of $f$, which in turn is optimal as exponent of $N^{-1}$ in the upper bounds for $E_f(N,\xi)$ for squareful  $N$ (see the last section of \cite{CMN}, or, a reasoning as for Corollary \ref{cor:Igusa2thm2}). Note that by Proposition \ref{lem:alpha}, one has
\begin{equation}\label{bound:min:exp}
\hat\alpha_f \ge (n-s)/d,
\end{equation}
which shows that (\ref{bound:SfNfull}) is indeed a sharper (or equally sharp) bound than (\ref{bound:SfN}).

\section{Some first evidence}\label{sec:intro:evidence}

In this section we translate some well-known results into evidence for Conjecture \ref{con1}, and we show the (new) case of $(d+2)$-th power free $N$. A key (but hard) case of Conjecture \ref{con1} for inhomogeneous $f$ is when $f_d$ is projectively smooth, namely with $s=0$, since the case of general $s$ can be derived from a sufficiently uniform form of the inhomogeneous case with $s=0$, see e.g.~how (\ref{eq:deligne}) is used below for squarefree $N$.  However, the inhomogeneous case with $s=0$ seems very hard at the moment. This should not be confused with Igusa's more basic case recalled in Section \ref{sec:ev1}, for homogeneous $f$ with $s=0$.

\subsection{}\label{sec:ev1}
When $f$ itself is smooth homogeneous, namely, $f=f_d$  and $s=0$, then Conjecture \ref{con:SfN} with $\varepsilon=0$ is known by Igusa's bounds from \cite{Igusa3}, by a straightforward computation and reduction to Deligne's bounds.
In detail,  if $f=f_d$ and $s=0$, Igusa \cite{Igusa3} showed (using \cite{DeligneWI}) that for each prime $p$ there is a constant $c_p$ such that
\begin{equation}\label{eq:igusa}
E_{f}(p^{m},\xi) \leq c_p p^{-mn/d} \mbox{ for all integers $m>0$ and all choices of $\xi$},
\end{equation}
and, that one can take $c_p=1$ when $p$ is larger than some value $M$ depending on $f$. More precisely, one can take $c_p=1$ when $p$ does not divide $d$ and when the reduction of $f$ modulo $p$ is smooth.
Furthermore, Igusa \cite{Igusa3}  shows that the exponent $-n/d$ of $p^m$ is optimal in the upper bound of (\ref{eq:igusa}) when $m=d$.
This easily shows that the exponent $(n-s)/d$ is optimal in Conjecture \ref{con1}, for example by taking 
\begin{equation}\label{eq:igusa:ex}
f = (x_1+\ldots+x_{s+1})^d + x_{s+2}^d + \ldots + x_n^d
\end{equation}
and $N=p^d$ for all prime numbers $p$.

\subsection{}\label{sec:ev2}
When $f$ is such that
\begin{equation}\label{eq:leq1}
(n-s)/d \leq 1,
\end{equation}
then Conjecture \ref{con:SfN} follows from \cite{CMN} and its recent solution of Igusa's conjecture for non-rational singularities. Indeed, in \cite{CMN} the stronger (and optimal) upper bounds from (\ref{bound:SfNfull}) are shown for all squareful $N$ in the case of non-rational singularities, as well as the case with $1$ in the exponent instead of $\hat\alpha_f$ in the case of rational singularities.   Recall that this is indeed stronger, by  (\ref{bound:min:exp}). The bounds for those integers $N$ that are not squareful are recovered by the treatment of squarefree $N$ below, by writing a general integer as a product of a squareful and a squarefree integer. We mention on the side that $\hat\alpha_f \le 1$ if and only if $f-v=0$ has non-rational singularities for some critical value $v\in \CC$ of $f$, by \cite{Saito-rational} and that in this case $\hat\alpha_f$ equals the minimum of the log canonical thresholds of $f-v$ for  $v$ running over the (complex) critical values of $f$.
\change{These results under condition (\ref{eq:leq1}) imply  that} Conjecture \ref{con:SfN} holds for all $f$ in three (or less) variables. Indeed, the degree two case is easy by diagonalizing $f_2$ over $\QQ$, and, (\ref{eq:leq1}) holds when $n\le 3\le d$. More surprizingly, Igusa's Conjecture (with the motivic oscillation index in the upper bound) is proved recently in \cite{NguyenVeys} for all polynomials in $3$ variables. Some related results of the special case with $n\le 2$ are developed in \cite{Fraser-Wright}, \cite{Lichtin4}, \cite{Veys:powerC}. In  Section \ref{sec:n=4} we will prove that Conjecture \ref{con:SfN} holds for all polynomials in up to four variables.

\subsection{}\label{sec:ev2.bis}
Although it is classical, let us explain the case of $d=2$ in more detail, by showing that Conjecture \ref{con1} holds with  $\varepsilon=0$ for $f$ of degree $d=2$. In fact, the argument as in the proof of Lemma 25 of \cite{Heath-Brown-d-2} is shorter and simpler for the case $d=2$, but our treatment will be useful later in this paper.
First suppose that the degree two part of $f$ is a diagonal form, namely, $f_2(x)=\sum_{i=1}^n a_i x_i^2$ for some $a_i\in \ZZ$.
In this case it is sufficient to show the case with $n=1$ and $d=2$  (indeed, $f=h_1(x_1)+\ldots+h_n(x_n)$ for some polynomials $h_j$ in one variable $x_j$ and of degree $\le 2$). But this case follows readily from Hua's bounds, see \cite{Hua1959} or \cite{Chalk} and is in fact elementary. 

For general $f$ with $d=2$, by diagonalizing $f_2$ over $\QQ$ and taking a suitable integer multiple, we find a matrix $T\in\ZZ^{n\times n}$ with nonzero determinant so that $f_2(Tx)$ is a diagonal form over $\ZZ$ in the variables $x$, namely, $f_2(Tx)=\sum_{i=1}^n a_i x_i^2$ for some $a_i\in \ZZ$. The map sending $x$ to $Tx$ transforms $\ZZ_p^n$ into a set of the form $\prod_{j=1}^n p^{e_{p,j}}\ZZ_p$ for some integers $e_{p,j}\ge 0$ (called a box). By composing with a map of the form $(x_j)_j\mapsto (b_jx_j)_j$ for some integers $b_j$ it is clear that we may assume that $T$ is already such that $e_{p,j}=e_p$ for all $p$ and all $j$ and some integers $e_p\ge 0$. Hence, the case $d=2$ follows from Lemma \ref{lem:T}.

\begin{lem}\label{lem:T}
Let $f$, $n$, $s,$ and $d$ be as in the introduction and let $T\in\ZZ^{n\times n}$ be a matrix with nonzero determinant and such that, for each prime $p$, the transformation $x\mapsto Tx$ maps $\ZZ_p^n$ onto $p^{e_p}\ZZ_p^n$ for some $e_p\ge 0$. Then, Conjecture \ref{con1} for each of the polynomials $g_i(x) := f(i+Tx)$ for $i\in  \ZZ^n$ implies conjecture \ref{con1} for $f$, and, similarly for Conjecture \ref{con1} with $\varepsilon=0$.
\end{lem}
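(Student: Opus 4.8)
The plan is to reduce the exponential sum for $f$ over $(\ZZ/N\ZZ)^n$ to a product (or at least a controlled combination) of exponential sums for the translated-and-transformed polynomials $g_i$, one prime at a time. By the remark around \eqref{eq:p-i}, it suffices to prove the bound \eqref{bound:SfN} for $N=p^m$ a prime power, so fix a prime $p$ and write $e=e_p$ for the integer with $T\ZZ_p^n = p^e\ZZ_p^n$. The key observation is that $T$ induces a bijection between $\ZZ_p^n/p^{m+e}\ZZ_p^n$ (more precisely, a suitable union of cosets) and $p^e\ZZ_p^n/p^{m+e}\ZZ_p^n$, so that a sum of $\xi^{f(x)}$ over $x$ ranging over $p^e\ZZ_p^n \bmod p^{m+e}$ can be rewritten, after the substitution $x = Ty$, as a sum of $\xi^{f(Ty)}$ over $y \bmod p^m$. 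To handle the full residue ring $\ZZ_p^n/p^m\ZZ_p^n$ rather than just the sublattice $p^e\ZZ_p^n$, I would partition $(\ZZ/p^m\ZZ)^n$ into cosets of the image lattice: writing a general point as $i + Tz$ with $i$ running over a set of coset representatives of $\ZZ^n/T\ZZ^n$ localized at $p$ (there are $|\det T|_p^{-1} = p^{ne}$ such cosets $p$-adically, a constant independent of $N$), the sum $\sum_{x \in (\ZZ/p^m\ZZ)^n}\xi^{f(x)}$ decomposes as $\sum_i \sum_z \xi^{f(i+Tz)} = \sum_i \sum_z \xi^{g_i(z)}$, where for each $i$ the inner sum is an exponential sum for $g_i$ modulo $p^{m'}$ for $m'$ equal to $m$ up to the bounded shift by $e$.

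The second step is bookkeeping on the normalization and the shift. After dividing by $p^{mn}$ (the normalization in $E_f$) versus $p^{m'n}$ (the normalization appropriate to each $g_i$), and accounting for the at most $p^{ne}$ values of $i$, one gets $E_f(p^m,\xi) \le p^{ne}\cdot p^{(m'-m)n}\cdot \max_i E_{g_i}(p^{m'},\xi')$ for an appropriate primitive root of unity $\xi'$, where $|m'-m|\le e$. Since $e$ depends only on $p$ and $T$ (and there are only finitely many primes $p$ for which $e_p \neq 0$, because $\det T$ has only finitely many prime divisors), the prefactor $p^{ne}p^{(m'-m)n}$ is a constant $c(f)$ independent of $N$. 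Crucially, $f$, $T\cdot x$, and the $g_i$ all have the same degree $d$ and the same homogeneous degree-$d$ part up to the linear change of variables $x\mapsto Tx$, which is invertible over $\QQ$; hence $s(g_i) = s(f)$ for all $i$, since $T$ being invertible means $(f_d \circ T)$ has critical locus $T^{-1}$ applied to that of $f_d$, of the same dimension. Therefore, if Conjecture \ref{con1} holds for each $g_i$ with exponent $-(n-s(g_i))/d = -(n-s)/d$, feeding this into the displayed inequality and absorbing all the $p$-dependent constants (over the finitely many bad primes) into the single constant $c(f,\varepsilon)$ yields \eqref{bound:SfN} for $f$. The $\varepsilon=0$ version follows by the identical argument, since no $\varepsilon$ is consumed anywhere in the reduction.

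The main obstacle I anticipate is not conceptual but careful: making the coset decomposition of $(\ZZ/p^m\ZZ)^n$ under the non-surjective map $x\mapsto Tx$ precise, in particular tracking exactly how the modulus changes (the passage from $p^m$ to $p^{m\pm e}$) and verifying that the root of unity $\xi$ transforms to a still-primitive root $\xi'$ of the appropriate order on each piece. One has to be slightly attentive that the hypothesis only says $T\ZZ_p^n = p^{e_p}\ZZ_p^n$ with equal exponents in all coordinates — this is exactly what was arranged in the paragraph preceding the lemma by composing with the diagonal map $(x_j)\mapsto(b_jx_j)$ — so that the image is genuinely a scaled copy $p^e\ZZ_p^n$ of the standard lattice and the decomposition is clean; without the equal-exponents normalization one would get a product of boxes and the reindexing would be messier, though still workable. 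A secondary point to get right is that the number of cosets $i$ and the shift $e$ must be shown to be zero for all but finitely many $p$ (precisely those $p \mid \det T$), which is what makes the global constant finite; this is immediate from $|\det T|_p = 1$ for $p\nmid \det T$, giving $e_p = 0$ and a single coset for such $p$, so that $E_f(p^m,\xi) = E_{g_0}(p^m,\xi)$ on the nose there.
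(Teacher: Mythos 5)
Your reduction is essentially the paper's: decompose, for each prime $p$, the summation domain into the $p^{ne_p}$ cosets of $T\ZZ_p^n=p^{e_p}\ZZ_p^n$ and substitute $x=i+Tz$, with only the finitely many primes $p\mid\det T$ being ramified. Your anticipated obstacle about the modulus and the character actually dissolves: since $T\ZZ_p^n=p^{e_p}\ZZ_p^n$, the value $g_i(z)\bmod p^m$ depends only on $z\bmod p^{m-e_p}$, so the inner sum over a coset is exactly $p^{-ne_p}$ times the full sum of $\xi^{g_i(z)}$ over $z\bmod p^m$; summing over the $p^{ne_p}$ cosets gives
$E_f(p^m,\xi)\le \max_i E_{g_i}(p^m,\xi)$ with the \emph{same} modulus $p^m$, the \emph{same} primitive $\xi$, and no prefactor at all. (The paper obtains this in one stroke by writing $E_f(p^m,\xi)$ as a $p$-adic integral and using the change of variables $x\mapsto i+Tx$, the point being that the coset measures $p^{-ne_p}$ sum to $1$.) Your observation that $s(g_i)=s(f)$ because $T$ is invertible over $\QQ$ is needed and correct, and is left implicit in the paper.

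The genuine soft spot is the reassembly over primes. The opening claim that ``it suffices to prove the bound for $N=p^m$ a prime power,'' combined with feeding Conjecture \ref{con1} for the $g_i$ into the local inequality prime by prime, accumulates one constant $c(g_0,\varepsilon)$ for \emph{every} prime dividing $N$, i.e.\ a factor $c(g_0,\varepsilon)^{\omega(N)}$ in (\ref{eq:p-i}). For the $\varepsilon>0$ statement this is harmless only after the extra (unstated) remark that $c^{\omega(N)}\ll_\varepsilon N^{\varepsilon}$, but it is fatal for the $\varepsilon=0$ statement, so the sentence ``no $\varepsilon$ is consumed anywhere in the reduction'' is not accurate under that reading. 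The repair is already contained in your final observation: for $p\nmid\det T$ one has $e_p=0$ and $E_f(p^{m_p},\xi_p)=E_{g_0}(p^{m_p},\xi_p)$ exactly, so these local factors should be recombined via (\ref{eq:p-i}) applied to $g_0$ (choosing the primitive root of unity whose local components are the $\xi_p$) into a single quantity $E_{g_0}(N',\xi')$, to which Conjecture \ref{con1} for $g_0$ is applied \emph{once}; only the finitely many primes $p\mid\det T$ are estimated individually, and they contribute a bounded product of constants. With that rearrangement both the $\varepsilon>0$ and the $\varepsilon=0$ versions follow, which is exactly how the paper's terse conclusion ``since $e_p=0$ for all but finitely many primes $p$, we are done'' should be read.
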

\begin{proof}
For each $i\in\ZZ^n$, write $g_i(x)$ for the polynomial $f(i+ Tx)$.
For any prime $p$, let $\mu_{p,n}$ be the Haar measure on $\QQ_p^n$, normalized so that $\ZZ_p^n$ has measure $1$. For any integer $m>0$ and  any primitive $p^m$-th root of unity $\xi$, we have, by the change of variables formula for $p$-adic integrals, and with $e=e_p$ and with integrals taken against the measure  $\mu=\mu_{p,n}$, 
\begin{eqnarray*}
E_f(p^m,\xi) &   = &   \abs{ \int_{x\in \ZZ_p^n} \xi^{f(x)\bmod p^m} \mu } \\
& \le & \sum_{j=1}^n\sum_{i_j=0}^{p^{e}-1} \abs{\int_{x\in i +(p^{e}\ZZ_p)^n} \xi^{f(x)\bmod p^m} \mu }. \\
\end{eqnarray*}
For each $i$ we further have
\begin{eqnarray*}
\abs{\int_{x\in i +(p^{e}\ZZ_p)^n} \xi^{f(x)\bmod p^m} \mu } & =&  p^{-ne} \abs{\int_{x\in \ZZ_p^n} \xi^{f(i+p^{e}x)\bmod p^m} \mu }\\
&=& p^{-ne}     \abs{\int_{x\in \ZZ_p^n} \xi^{g_i (x)\bmod p^m} \mu } \\
&   = &  p^{-ne}E_{g_i}(p^m,\xi).
\end{eqnarray*}
Since $e_p=0$ for all but finitely many primes $p$, we are done.
 \end{proof}

\subsection{}\label{sec:ev3}
When one restricts to integers $N$ which are squarefree (namely, not divisible by a nontrivial square), then Conjecture \ref{con:SfN} with $\varepsilon=0$ follows from Deligne's bound from \cite{DeligneWI}, as we now explain. The reasoning is classical but also instructive for later use in this paper; a similar induction argument on $s\ge 0$ already appears in \cite{Hooley.int}.
By \cite{DeligneWI}, for each prime number $p$ such that the reduction of $f_d$ modulo $p$ is smooth, one has
\begin{equation}\label{eq:deligne}
E_{f}(p,\xi) \leq (d-1)^n p^{-n/2}\  \mbox{ for each primitive $p$-th root of unity $\xi$,}
\end{equation}
where smooth means that the reduction modulo $p$ of the equations  (\ref{eq:grad-fd}) have $0$ as only solution over an algebraic closure of $\FF_p$. 
If $s=0$  then the reduction of $f_d$ modulo $p$ is smooth whenever $p$ is large and thus Conjecture \ref{con1} for squarefree $N$ and with $\varepsilon=0$ follows for $f$ with $s=0$ (note the different exponent of $p$ in (\ref{eq:deligne}) and of $N$ in (\ref{con1} when $d>2$).    
We proceed by induction on $s$ by restricting $f$ to hyperplanes, as follows. The bound  (\ref{eq:deligne}) for all large $p$ is our base case when $s=0$. Now suppose that $s>0$. After a linear coordinate change of $\AA_\ZZ^n$, we may suppose that the polynomial $g(\hat x) := f(0,\hat x)$ in the variables $\hat x = (x_2,\ldots,x_n)$ is still of degree $d$ and that its degree $d$ homogeneous part $g_d$ has critical locus of dimension $s-1$.  
Hence, for large prime $p$, the reduction of $g_d$ modulo $p$ has also critical locus of dimension $s-1$, in $\AA_{\FF_p}^{n-1}$.
Hence, for large $p$, one has by induction on $s$ that
\begin{equation}\label{eq:deligne-ga}
|\frac{1}{p^{n-1} } \sum_{\hat x=(x_2,\ldots,x_n)\in \FF_p^{n-1}}  \xi^{f(a,\hat x)} | \leq (d-1)^{n-s} p^{-(n-s)/2}
\end{equation}
for each $a\in \FF_p$ and each primitive $p$-th root of unity $\xi$.
Indeed, the polynomial $f(a,\hat x)$ has $g_d$ mod $p$ as its degree $d$ homogeneous part for each $a\in \FF_d$. Now, summing over $a\in \FF_p$ and dividing by $p$  gives
\begin{equation}\label{eq:deligne:s}
|\frac{1}{p^{n} } \sum_{x\in \FF_p^{n}}  \xi^{f(x)} | \leq (d-1)^{n-s} p^{-(n-s)/2}
\end{equation}
for large $p$ (coming from the condition that the reduction of $g_d$ modulo $p$ has critical locus of dimension $s-1$).
Conjecture \ref{con1} with $\varepsilon=0$ for squarefree integers $N$ thus follows, by comparing the exponents in the upper bounds of (\ref{eq:deligne:s}) and (\ref{bound:SfN}), which allows to swallow the constant $(d-1)^{n-s}$ when $d>2$.
(Alternatively, one can use the much more general Theorem 5 of \cite{Katz} when $d>2$ and an argument as in Section \ref{sec:ev2.bis} when $d=2$.)

\subsection{}\label{sec:HeathB}
When one restricts to integers $N$ which are cube free (namely, not divisible by a nontrivial cube), then Conjecture \ref{con:SfN} with $\varepsilon=0$ follows exactly in the same way as for squarefree $N$, but now using both the bounds from \cite{Heath-B-cube-free} and from \cite{DeligneWI}.
Indeed, this similarly gives
\begin{equation}\label{eq:cube-free:s}
 E_f(p^2,\xi)  \leq (d-1)^{n-s} p^{-(n-s)}
\end{equation}
for large $p$ and all $\xi$. Together with the squarefree case, this implies the cube free case of Conjecture \ref{con1}, with $\varepsilon=0$ (note again the different exponent of $p^2$ in (\ref{eq:cube-free:s}) and of $N$ in (\ref{con1}), when $d>2$). 
In fact, with some more work we can go up to $d+2$-th powers instead of just cubes, as follows.
\begin{prop}\label{lem:d+2}
Conjecture \ref{con1} with $\varepsilon=0$
holds when restricted to integers $N$ which are not divisible by a non-trivial $d+2$-th power.
In detail, let $f$, $n$, $s,$ and $d$ be as in the introduction. 
Then there is a constant $c=c(f_d)$ (depending only on $f_d$)
such that for all integers $N>0$ which are not divisible by a non-trivial $d+2$-th power and all primitive $N$-th roots $\xi$ of $1$, one has
\begin{equation}\label{bound:SfNd+2}
E_f(N,\xi) \le c N^{-\frac{n-s}{d}}.
\end{equation}
\end{prop}

We will prove Proposition \ref{lem:d+2} by making a link between $E_f(p^m,\xi)$  and finite field exponential sums, as follows.
For any prime $p$, any $m>0$, any point $P$ in $\FF_p^n$ and any $\xi$, write
\begin{equation}\label{EPf}
E^{P}_f(p^m,\xi)  :=\abs{ \frac{1}{p^{mn}}  \sum_{x\in P + (p\ZZ/p^m\ZZ)^n}  \xi^{f(x)} }.
\end{equation}
Compared to $E_f(p^m,\xi)$, the summation set for $E^{P}_f(p^m,\xi)$ has $p$-adically zoomed in around the point $P$.

Let us consider the following positive characteristic analogues.
\begin{equation}\label{eq:Eft}
E_f(t^m,\psi) := \abs{ \frac{1}{p^{mn} }  \sum_{x\in (\FF_p[t]/(t^m) )^n}  \psi({f(x))} },
\end{equation}
and
\begin{equation}\label{eq:EftP}
E^P_f(t^m,\psi):= \abs{ \frac{1}{p^{mn} }  \sum_{x\in P + (t\FF_p[t]/(t^m) )^n}  \psi({f(x))} },
\end{equation}
for any primitive additive character
$$
\psi:\FF_p[t]/(t^m)\to \CC^\times,
$$
where primitive means that $\psi$ does not factor through the projection
$$
\FF_p[t]/(t^m)\to \FF_p[t]/(t^{m-1}).
$$

The sums of (\ref{eq:Eft}), resp.~(\ref{eq:EftP}), can be rewritten as finite field exponential sums, to which classical bounds like (\ref{eq:deligne:s}) apply.  This is done 
by identifying the summation set 
with $\FF_p^{mn}$, resp.~
with $\FF_p^{(m-1)n}$, namely by sending a polynomial in $t$ to its coefficients, while forgetting the constant terms in the second case. 

We first prove the following variant of Proposition \ref{lem:d+2}.
\begin{prop}\label{lem:d+2:t}
Let $f$, $n$, $s,$ and $d$ be as in the introduction. Then there is a constant $M$ (depending only on $f_d$)
such that for all primes $p$ with $p>M$, all integers  $m>0$ with $m \le d+1$, and all primitive additive characters $\psi:\FF_p[t]/(t^m)\to \CC^\times$
one has
\begin{equation}\label{bound:SfNd+2:t}
E_f(t^m,\psi) \le p^{-m\cdot \frac{n-s}{d}}.
\end{equation}
\end{prop}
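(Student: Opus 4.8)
The plan is to reduce the sum $E_f(t^m,\psi)$ to a finite field exponential sum of the type bounded in (\ref{eq:deligne:s}), and to track how the ``power of $p$ saved'' scales with $m$ as long as $m \le d+1$. First I would identify the summation set $(\FF_p[t]/(t^m))^n$ with $\FF_p^{mn}$ by sending $x_j = \sum_{k=0}^{m-1} x_{j,k} t^k$ to the coefficient vector $(x_{j,k})_{j,k}$. Under a primitive additive character $\psi$ on $\FF_p[t]/(t^m)$, which is given by $\psi(\sum_k a_k t^k) = \psi_0(\sum_k \lambda_k a_k)$ for a fixed additive character $\psi_0$ of $\FF_p$ and scalars $\lambda_k$ with $\lambda_{m-1}\neq 0$, the expression $\psi(f(x))$ becomes $\psi_0(g(x_{j,k}))$ where $g$ is the polynomial over $\FF_p$ obtained by expanding $f$ in powers of $t$ and reading off the appropriate linear combination of the coefficients of $t^0,\dots,t^{m-1}$. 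The key structural observation is that, because $\lambda_{m-1}\neq 0$ and $f$ has degree $d\geq m-1$ (using $m\le d+1$), the coefficient of $t^{m-1}$ in $f(\sum_k x_{j,k}t^k)$ picks up, among other terms, the full degree-$d$ form $f_d$ evaluated on the \emph{leading} coefficients $(x_{j,0})_j$ when $m-1$ equals... more precisely, one must choose which slice of coefficients carries the top-degree information. The cleanest choice: the coefficient of $t^{m-1}$ in $f(x)$, as a polynomial in all the $x_{j,k}$, has its degree-$d$ homogeneous part equal to $f_d(x_{1,0},\dots,x_{n,0})$ when we only allow the lowest-index coefficients to contribute at top degree — so that the polynomial $g$ on $\FF_p^{mn}$ has a degree-$d$ part whose critical locus, intersected appropriately, still has controlled dimension.

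Next I would compute the dimension of the critical locus of $g_d$ (the degree-$d$ part of $g$) inside $\AA^{mn}_{\FF_p}$. Since $g_d$ only involves the $n$ variables $x_{1,0},\dots,x_{n,0}$ and equals $\lambda_{m-1} f_d$ on those, its critical locus is the product of $\Crit(f_d)\subseteq \AA^n$ (dimension $s$) with the affine space $\AA^{(m-1)n}$ in the remaining variables, hence has dimension $s + (m-1)n$. Therefore the ambient dimension minus the critical dimension is $mn - (s+(m-1)n) = n - s$. Applying the bound (\ref{eq:deligne:s}) — valid for all large $p$, with the threshold $M$ depending only on $f_d$ since $g_d$ is determined by $f_d$ and $\lambda_{m-1}$, a nonzero scalar that does not affect smoothness — gives
\[
E_f(t^m,\psi) \le (d-1)^{\,n-s}\, p^{-(n-s)/2}.
\]
The final step is the arithmetic comparison: we need this to be $\le p^{-m(n-s)/d}$. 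Since $m \le d+1$... this is where I expect the main difficulty, and indeed the naive bound $p^{-(n-s)/2}$ is only good enough when $(n-s)/2 \ge m(n-s)/d$, i.e. when $m \le d/2$, which is weaker than $m\le d+1$.

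So the real content — and the main obstacle — is that the crude finite-field bound is \emph{not} enough for $d/2 < m \le d+1$, and one must exploit the extra structure: the polynomial $g$ on $\FF_p^{mn}$ is far from generic, and one should instead bound the sum by \emph{iterating} the zooming/stratification argument, peeling off one ``layer'' of $t$-adic coefficients at a time. Concretely, I would set up a recursion relating $E_f(t^m,\psi)$ to sums of the form $E^P_f$ over the next layer down, in the spirit of the passage from (\ref{eq:deligne}) to (\ref{eq:cube-free:s}): each time one zooms in by a factor of $t$ one gains a fresh factor of roughly $p^{-(n-s)}$ (because the relevant finite-field sum at each stage again has ambient-minus-critical dimension equal to $n-s$, by the same computation as above applied to the translated polynomial $f(P + tx)$ whose degree-$d$ part is still $f_d$), and after $m$ such steps one has accumulated $p^{-m(n-s)}$, which is stronger than needed. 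The care required is (i) to verify that translating by a point $P\in\FF_p[t]/(t^m)$ preserves the degree-$d$ part $f_d$ and hence the smoothness threshold $M$ uniformly, so that $M$ depends only on $f_d$; (ii) to handle the bookkeeping of the characters $\psi$ and $\psi'$ at each stage, checking primitivity is inherited correctly; and (iii) to make sure the inductive estimate is clean enough that summing the geometric-type error over the $p$-many translates at each stage only costs a bounded power of $p$ per stage, which is exactly the $-(n-s)$ versus $-(n-s)/d$ slack (available since $d>1$) that makes the recursion close. Once Proposition \ref{lem:d+2:t} is in hand, Theorem \ref{lem:d+2} will follow by the Chinese remainder factorization (\ref{eq:p-i}), combining these $t$-adic (equivalently, via the stated identification of summation sets, $p$-adic) bounds for $m\le d+1$ with the elementary trivial bound for the finitely many small primes $p\le M$ and the easy fact that $d+2$-th-power-free $N$ forces each local exponent $e_i\le d+1$.
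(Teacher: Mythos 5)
The central gap is in your ``iterating the zooming'' step: you claim that each pass down a $t$-level gains a factor $p^{-(n-s)}$, so that after $m$ levels you would obtain $E_f(t^m,\psi)\ll p^{-m(n-s)}$. This bound is too strong to be true. Take $f=\sum_i x_i^d$ (so $s=0$): Igusa's computation recalled in Section~\ref{sec:ev1} gives $E_f(p^d,\xi)=p^{-n}$ for $p\nmid d$, and the analogous exact evaluation holds over $\FF_p\llb t\rrb$, whereas your claim at $m=d$ would give $p^{-dn}$. The source of the error is that the critical-point decomposition $E_f(t^m,\psi)=\sum_{P\in C_p}E^P_f(t^m,\psi)$ of (\ref{eq:count0}) yields a \emph{one-time} saving of $p^{-n}$ (each $E^P_f$ is trivially $\le p^{-n}$ because the summation set around $P$ has only $p^{(m-1)n}$ elements), not a per-layer cascade: once you are sitting at a critical point $P$, the gradient of $f$ vanishes mod $p$, so the ``next-layer'' stationarity condition is controlled by the Hessian of $f$ at $P$, and for $d\ge 3$ and a degenerate critical point (as for $P=0$ when $f=f_d$) that Hessian vanishes identically, so the next zoom kills nothing. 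Separately, in your first attempt the degree-$d$ part of the $mn$-variable polynomial $g$ is not just $\lambda_{m-1}f_d(x_{1,0},\dots,x_{n,0})$: every $\lambda_k$ contributes a degree-$d$ form (the coefficient of $t^k$ in $f_d(\sum_\ell x_{\cdot,\ell}t^\ell)$), so the critical-locus computation needs more care; you do, however, correctly diagnose that this route alone can only reach $m\le d/2$.

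For the record, the paper's proof takes a shorter path. After reducing to $s=0$ (and $m\ge 2$, $d\ge 3$), it applies the decomposition (\ref{eq:count0}) once, with $\#C_p\le c_1$ bounded independently of $p$. For $m<d$ the trivial count $E^P_f\le p^{-n}$ already beats $p^{-mn/d}$ strictly, so $c_1$ is absorbed for $p$ large. For $m=d$ the trivial bound is exactly breakeven, so one distinguishes: if $f\bmod p$ is smooth homogeneous then $C_p=\{0\}$ and equality $p^{-n}$ holds with no constant; otherwise one rewrites $E^P_f$ as a finite-field sum in $(m-1)n$ variables and gains an extra $p^{-1/2}$ from (\ref{eq:deligne-ga}). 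For $m=d+1$ the trivial bound falls short by $p^{n/d}$, and the same rewriting gives a finite-field sum in $n(m-1)=nd$ variables whose degree-$d$ part is $f_d$ with critical locus of codimension $n$, hence an extra $p^{-n/2}$; since $d\ge3$, $n/2>n/d$, closing the gap. Finally, going from Proposition~\ref{lem:d+2:t} to Theorem~\ref{lem:d+2} is \emph{not} a re-identification of summation sets: $\ZZ/p^m\ZZ$ and $\FF_p[t]/(t^m)$ are different rings for $m\ge 2$, and the paper invokes the transfer principle for bounds of \cite{CGH4} to pass between them.
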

\begin{proof}
By a reasoning as for the squarefree case, it is sufficient to treat the case with $s=0$ for large $p$, while letting the $f_i$ for $i<d$ vary over homogeneous polynomials in $\FF_p[t,x]$ of degree $i$ in $x$, and while keeping $f_d$ fixed in $\ZZ[x]$.
So, we may assume that $s=0$, and, by the squarefree case treated above, that $m>1$. We also may assume that $d\ge 3$ by the above treatment of the case $d=2$.
%
For each $p$, let $C_p$ be the set of critical points of the reduction of $f$ modulo $p$. 
Since $s=0$, one has $\# C_p \le c_1$ for some constant $c_1$ depending only on $n$ and $d$, see for example the final inequality of \cite{Heath-B-cube-free}, or Lemma \ref{criticalpoint} below.
Clearly we have
\begin{equation}\label{eq:count0}
E_f(t^m,\psi)  = \sum_{P\in C_p}   E^{P}_f(t^m,\psi)
\end{equation}
for all primes $p>d$, all $m>1$ and all primitive $\psi:\FF_p[t]/(t^m)\to \CC^\times$.
For $m< d$,  note that 
\begin{equation}\label{eq:countP}
\frac{1}{p^{mn}} \cdot \# (t\FF\llb t\rrb /(t^m))^n =  p^{ n(m-1)   -mn} = p^{-n} < p^{-mn/d}.
\end{equation}
For $m< d$,  we thus find by (\ref{eq:count0}) that
\begin{equation}\label{eq:countPP}
E_f(t^m,\psi)\le c_1 p^{-n},
\end{equation}
and  (\ref{bound:SfNd+2:t}) follows when $m<d$ and $p$ is large enough 
so that the constant $c_1$ is eaten by the extra saved power of $p$ coming from (\ref{eq:countP}).
We now treat the case that $m=d$. 
If $p>d$ is such that the reduction of $f$ modulo $t$ is smooth homogeneous of degree $d$, then $C_p=\{0\}$ and there is nothing left to prove since then $E_f(t^m,\psi)=E^{P_0}_f(t^m,\psi) \le p^{-n}=p^{-mn/d}$, with $P_0=\{0\}$.
If the reduction of $f$ modulo $t$ is not homogeneous of degree $d$, and, $p$ is larger than $d$, then there is a constant $c_2$ (depending only on $n$ and $d$) such that
\begin{equation}\label{eq:countPPP}
E^{P}_f(t^m,\psi) \le c_2 p^{-n - 1/2}
\end{equation}
for all $P$ in $C_p$ and all primitive $\psi$. Indeed, this follows from the worst case of (\ref{eq:deligne-ga}) applied to $E^P_f(t^m,\psi)$, after rewriting it as a finite field exponential sum as explained just above the proposition. The case $m=d$ for  (\ref{bound:SfNd+2:t}) follows, where the constant $c_2$ is eaten by the extra saved power of $p$ coming from $d\ge 3$ and (\ref{eq:countPPP}). For $d=m+1$, when we rewrite
$E^P_f(t^m,\psi)$ for $P\in C_p$ as a finite field exponential sum over $(m-1)n$ variables running over $\FF_p$, we can again apply (\ref{eq:deligne-ga}), now in $n(m-1)$ variables and with highest degree part $f_d$ which has singular locus of dimension $n(m-2)$ inside $\AA^{n(m-1)}$.  Since $d\ge 3$, we again can use the extra saved power of $p$ to obtain (\ref{bound:SfNd+2:t}) and the proposition is proved.
\end{proof}

The transfer principle for bounds from  Theorem 3.1 of \cite{CGH4} can be applied to compare the exponential sums $E_f(p^m,\xi)$ and $E_f(t^m,\psi)$; we will use it in the following basic form.  Recall that a Presburger subset $A$ of $\NN$ is a Boolean combination of congruence classes and subintervals of $\NN$.
\begin{cor}[\cite{CGH4}]\label{cor:CGH4}
Let $g$ a homogeneous polynomial over $\ZZ$, of degree $d>1$ and in $n$ variables. Consider a real number $\sigma>0$ and a Presburger subset $A\subset \NN$. Then the following two statements are equivalent.
\begin{enumerate}
\item There exist constants $c$ and $M$ such that for all primes $p>M$ and all polynomials $f$ in $\ZZ_p[x_1,\ldots,x_n]$  of degree $d$ and with  $f_d=g$, one has
\begin{equation*}\label{transfer.bound}
E_f(p^m,\xi) \leq c p ^{-\sigma m}
\end{equation*}
for all $m\in A$  and all primitive $p^m$-th roots of unity $\xi$.

\item There exist constants $c'$ and $M'$ such that for all primes $p>M'$ and all polynomials $f$ in $\FF_p[[t]][x_1,\ldots,x_n]$ of degree $d$ and such that  $f_d  = g \bmod (p)$ holds in $\FF_p[x]$,  one has
\begin{equation*}\label{transfer.bound.t}
E_f(t^m,\psi) \leq c' p ^{-\sigma m}
\end{equation*}
for all $m\in A$ and all primitive 
characters $\psi:\FF_p[t]/(t^m)\to \CC^\times$.
\end{enumerate}
\end{cor}
In the corollary, we have extended the notation $E_f$ to more general $f$, namely with more general coefficients than just in $\ZZ$, in the obvious way.
More generally than Corollary \ref{cor:CGH4}, Theorem 3.1 of \cite{CGH4} allows to transfer bounds that hold for motivic families of functions, and, the families in the corollary are a special case of such family.
\begin{proof}[Proof of Corollary \ref{cor:CGH4}]
Clearly the left hand sides and of the right hand sides of the inequalities come from motivic functions $H$ and $G$ as required in Theorem 3.1 of \cite{CGH4}. Now the corollary readily follows from the conclusion of Theorem 3.1 of \cite{CGH4} for such $H$ and $G$.
\end{proof}

\begin{proof}[Proof of Proposition \ref{lem:d+2}]
We show
for all large primes $p$, all integers  $m>0$ with $m \le d+1$, and all primitive $p^m$-th roots of $1$,  that
\begin{equation}\label{bound:SfNd+2:pp}
E_f(p^m,\xi) \le p^{-m\cdot \frac{n-s}{d}},
\end{equation}
where moreover the lower bound on $p$ depends only on $f_d$. The case $d=2$ is already shown.
For $m\not=d\ge 3$ this follows at once from Corollary \ref{cor:CGH4} and the corresponding extra power savings when $m\not=d$ in the proof of Proposition \ref{lem:d+2:t}.
Indeed, the transfer principle holds uniformly in $f$ as long as $f_d$ and $n$ are fixed, since these bounds (with the extra power savings) from the proof of Proposition \ref{lem:d+2:t} depend only on $f_d$ and $n$.
Let us now treat the remaining case of $m=d$. It is again enough to treat the case $s=0$. For a prime $p>d$ such that the reduction of $f$ modulo $p$ is not smooth homogeneous of degree $d$,  we are done similarly by the transfer principle for bounds from \cite{CGH4} and the corresponding power savings in the proof Proposition \ref{lem:d+2:t}.
If $m=d$ and $p>d$ is such that the reduction of $f$ modulo $p$ is smooth homogeneous of degree $d$, then we have that $P_0=\{0\}$ is the unique critical point of the reduction of $f$ modulo $p$, and thus
$$
E_f(p^m,\xi)=E^{P_0}_f(p^m,\xi) \le p^{-n}=p^{-mn/d}.
$$
The proof of Proposition \ref{lem:d+2} is thus finished.
\end{proof}

\section{The non-degenerate case}\label{sec:CAN}

In this section we show that Conjecture \ref{con:SfN} with $\varepsilon=0$ holds for non-degenerate polynomials, where the non-degeneracy condition is with respect to the Newton polyhedron of $f-f(0)$ at zero as in \cite{CAN} (which is slightly different than the non-degeneracy notion of \cite{Kuchn}, \cite{Varchen}). The non-degeneracy condition generalizes the situation where $f$ is a sum of monomials in separate variables, like $x_1x_2+x_3x_4$.
In detail, writing $f(x)-f(0) = \sum_{i\in\NN^n} \beta_i x^i$ in multi-index notation, let
$$
\Supp_f  :=\{i\in\NN^n\mid \beta_i\not=0\}
$$
be the support of $f-f(0)$. Further, let
$$
\Delta_0(f) := \Conv (\Supp_f + (\RR_{\geq 0})^n)
$$
be the convex hull of the sum-set of $\Supp_f$ with $(\RR_{\geq 0})^n$ where $\RR_{\geq 0}$ is $\{x\in\RR\mid x\ge 0\}$. The set $\Delta_0(f)$ is called the Newton polyhedron of $f-f(0)$ at zero.
Let $\sigma_f$ be the unique real value such that $(1/\sigma_f,\ldots,1/\sigma_f)$ is contained in a proper face of $\Delta_0(f)$.
Further, let $\kappa$ denote the maximal codimension in $\RR^n$ of $\tau$ when $\tau$ varies over the faces of $\Delta_0(f)$  containing $(1/\sigma_f,\ldots,1/\sigma_f)$.
For each face $\tau$ of the polyhedron $\Delta_0(f)$, consider the polynomial
$$
f_\tau := \sum_{i\in\tau} \beta_i x^i.
$$
Call $f$ non-degenerate with respect to $\Delta_0(f)$ when for each face $\tau$ of $\Delta_0(f)$ and each critical point $P$ of $f_\tau:\CC^n\to\CC$, at least one coordinate of $P$ equals zero. Recall that a complex point $P\in\CC^n$ is called a critical point of $f_\tau$ if $\partial f_\tau /\partial x_i (P)=0$ for all $i=1,\ldots,n$.

The following proposition  slightly extends the main result of \cite{CAN} as it covers small primes as well.  Note that \cite[Theorem 1.4.1]{CAN} gives evidence for Igusa's conjecture on exponential sums in the variant of \cite[Conjecture 1.2]{CVeys}.

\begin{prop}[{\cite[Theorem 1.4.1]{CAN}}]\label{prop:CANsigma}
Suppose that $f$ is non-degenerate with respect to~$\Delta_0(f)$.
Then, there is a constant $c$ such that for all primes $p$, all integers $m\ge 2$ and all primitive $p^m$-th roots of unity $\xi$ one has
\begin{equation}\label{eq:CAN}
E_f(p^m,\xi) \le c p^{-m\sigma_f}m^{\kappa-1}.
\end{equation}
\end{prop}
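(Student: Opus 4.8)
The plan is to reduce Proposition~\ref{prop:CANsigma} to the main theorem of \cite{CAN}, which already gives the estimate \eqref{eq:CAN} for all primes $p$ larger than some bound $M$ depending on $f$, and then to handle the finitely many remaining small primes separately. So let $M$ be the constant from \cite{CAN}; for $p>M$ there is nothing to do, and it suffices to produce, for each fixed prime $p\le M$, a constant $c_p$ such that $E_f(p^m,\xi)\le c_p\,p^{-m\sigma_f}m^{\kappa-1}$ for all $m\ge 2$ and all primitive $p^m$-th roots of unity $\xi$. Taking $c$ to be the maximum of the \cite{CAN} constant and the finitely many $c_p$ then finishes the proof.

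For a fixed small prime $p$, I would argue as follows. First, since $f$ is non-degenerate with respect to $\Delta_0(f)$, the Newton polyhedron $\Delta_0(f)$, the value $\sigma_f$, and the codimension $\kappa$ are unchanged by the coordinate translation $x\mapsto x-a$ only at the point $0$; but the non-degeneracy hypothesis is a statement about all faces $\tau$ and is insensitive to the base prime. The key point is that the local zeta function / exponential sum estimate we need is governed by an embedded resolution of $f$ (or of $f-f(0)$) that is adapted to the toric structure of $\Delta_0(f)$; the toric/Newton-polyhedron resolution used in \cite{CAN} works over $\ZZ[1/p]$ only because one must invert the coefficients of $f$ that can vanish mod $p$, but one can instead take \emph{any} embedded log resolution $h:Y\to\AA^n$ of $f-f(0)$ over $\ZZ_{(p)}$ (which exists in characteristic zero and has good reduction for $p$ outside a finite set, but for our fixed single $p$ we just need one concrete model over $\ZZ_p$). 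Igusa's stationary phase / resolution method then expresses $E_f(p^m,\xi)$ as a finite sum over the strata of the special fiber of $Y$, each contributing a term controlled by the numerical data $(N_i,\nu_i)$ of the exceptional and strict-transform divisors. The pole of largest real part of the associated local zeta function is $-\sigma_f$ precisely because $(1/\sigma_f,\dots,1/\sigma_f)$ lies on a proper face of $\Delta_0(f)$ and the non-degeneracy kills the would-be contributions from the critical points of the face polynomials $f_\tau$ (this is exactly the mechanism of \cite{CAN}); the multiplicity of that pole is at most $\kappa$, the maximal number of relevant divisors meeting at a point, which yields the factor $m^{\kappa-1}$ after the standard computation of the coefficients in the partial-fraction expansion. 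So the bound $E_f(p^m,\xi)\le c_p p^{-m\sigma_f}m^{\kappa-1}$ follows for this fixed $p$ with $c_p$ depending on the chosen resolution over $\ZZ_p$.

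A cleaner alternative, which I would actually prefer to write, is to invoke the motivic/uniform machinery directly: the estimate \eqref{eq:CAN} with an implied constant and with the \emph{same} exponent $\sigma_f$ and the \emph{same} power $m^{\kappa-1}$ is a specialization of a motivic identity (or motivic bound) that holds uniformly in the local field, as in the framework of \cite{CGH4,CGH5}. Since \cite{CAN} establishes \eqref{eq:CAN} for all $p$ outside a finite set $S$ with a uniform constant, and since for each $p\in S$ the quantity $E_f(p^m,\xi)$ for $m\ge 2$ is bounded by the general structure of Igusa's local zeta function (rational in $p^{-s}$, with largest pole $-\sigma_f$ of order $\le\kappa$, by the non-degeneracy input), one simply absorbs the finitely many small primes into the constant. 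The only genuine obstacle is bookkeeping: one must be sure that for the small primes the \emph{order} of the pole at $-\sigma_f$ does not exceed $\kappa$ even though the reduction of the Newton-polyhedron resolution may be worse than in the generic case --- this is handled by noting that passing to a possibly non-toric log resolution over $\ZZ_p$ only refines the stratification and, combined with the non-degeneracy hypothesis (which forbids new critical contributions), cannot increase the order of the relevant pole beyond $\kappa$. Everything else is the standard translation between exponential sums mod $p^m$ and residues of Igusa zeta functions via Igusa's stationary phase formula.
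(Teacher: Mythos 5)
Your overall strategy matches the paper's: invoke the main theorem of \cite{CAN} to dispose of all but finitely many primes, then fix a small prime $p$ and absorb it into the constant by proving a $p$-dependent bound via Igusa's resolution-based analysis. That reduction is fine.

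The gap is in the fixed-small-prime step, where you gesture at ``non-degeneracy forbids new critical contributions'' but never isolate or prove the statement that actually makes the argument work. Igusa's method controls $E_f(p^m,\xi)$ for $m\to\infty$ through the local zeta functions of $f-v$ at the $\QQ_p$-critical points of $f$ with critical value $v$. The Newton polyhedron $\Delta_0(f)$ and the quantity $\sigma_f$ only control the local zeta function of $f-f(0)$; they say nothing about the singularities of $f-v$ at a critical point whose value $v$ differs from $f(0)$, and such a point could a priori dominate the asymptotics and ruin the exponent $\sigma_f$. What the paper proves, and what your sketch omits, is the clean statement that non-degeneracy with respect to $\Delta_0(f)$ forces $f(0)$ to be the \emph{only} critical value of $f$ on $\CC^n$ (proved by a short induction on $n$: a critical point must have some vanishing coordinate, and one then applies the induction hypothesis to the restricted polynomial). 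Once this is established, only the local zeta function of $f-f(0)$ is relevant, and the existence of a toric log-resolution of $f-f(0)$ with numerical data read off from $\Delta_0(f)$ (Varchenko) plus Igusa's theory finishes the fixed-$p$ bound. Without this lemma your argument is not merely bookkeeping-incomplete; the conclusion would genuinely fail to follow, because the bound via $\sigma_f$ and $\kappa$ only pertains to the origin. Your worry about whether a non-toric resolution might worsen the pole order at $-\sigma_f$ is a red herring: one simply uses the toric resolution itself, whose numerical data is a characteristic-zero invariant independent of $p$; the real danger you needed to rule out was contributions from other critical values.
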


From Proposition \ref{prop:CANsigma}  we will derive the following evidence for Conjecture \ref{con1}. 
\begin{thm}
\label{prop:CAN}
Let $f$, $n$, $s$, and $d$ be as in the introduction. Suppose that $f$ is non-degenerate with respect to $\Delta_0(f)$.
Then Conjecture \ref{con1} with $\varepsilon=0$ holds for $f$. Namely,
there is $c$ such that for all integers $N>0$ and all primitive $N$-th roots of unity $\xi$, one has
$$E_f(N,\xi) \le c N^{-\frac{n-s}{d}}.$$

Furthermore, for all large  primes $p$ (with `large' depending on $f$), all integers $m>0$ and all primitive $p^m$-th roots of unity $\xi$ one has
$$E_f(p^m,\xi) \le p^{-m\frac{n-s}{d}}.$$
\end{thm}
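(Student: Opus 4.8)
The plan is to deduce the theorem from Proposition~\ref{prop:CANsigma} together with the inequality $\sigma_f\ge (n-s)/d$, the latter being the main obstacle.

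\emph{Reductions.} By the Chinese remainder theorem in the form~(\ref{eq:p-i}), the first assertion reduces to producing, for each prime $p$, a constant $c_p$ with $c_p=1$ for all but finitely many $p$ and $E_f(p^m,\xi)\le c_p\,p^{-m(n-s)/d}$ for all $m>0$ and all primitive $\xi$; the second assertion is then exactly the statement that $c_p=1$ for $p$ large. We may assume $d\ge 3$, since the case $d=2$ is contained in Section~\ref{sec:ev2}, where moreover, after diagonalizing and using Lemma~\ref{lem:T}, the relevant sums are products of Gauss-type sums of absolute value $\le p^{-m/2}$ for $p$ large, so that $E_f(p^m,\xi)\le p^{-mn/2}\le p^{-m(n-s)/d}$. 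The case $m=1$ follows from the square free case of Section~\ref{sec:ev3}: Deligne's bound gives $E_f(p,\xi)\le(d-1)^{n-s}p^{-(n-s)/2}\le(d-1)^{n-s}p^{-(n-s)/d}$, which for $p$ large is $\le p^{-(n-s)/d}$ because $d\ge 3$. For $m\ge 2$, Proposition~\ref{prop:CANsigma} gives $E_f(p^m,\xi)\le c\,p^{-m\sigma_f}m^{\kappa-1}$.

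\emph{Absorbing the error terms.} Grant the key inequality $\sigma_f\ge(n-s)/d$, with $\kappa=1$ whenever equality holds. If $\sigma_f=(n-s)/d$, then $\kappa=1$ removes the factor $m^{\kappa-1}$ and $E_f(p^m,\xi)\le c\,p^{-m(n-s)/d}$ for all $m\ge 2$, with $c=1$ for $p$ large by the uniformity in Proposition~\ref{prop:CANsigma}. If $\sigma_f>(n-s)/d$, set $\delta:=\sigma_f-(n-s)/d>0$; once $p$ is large enough (depending only on $f$) that $p^{\delta}\ge 2^{\kappa-1}$ and $p^{2\delta}\ge c\,2^{\kappa-1}$, the sequence $p^{m\delta}/(c\,m^{\kappa-1})$ is nondecreasing in $m\ge2$ and is $\ge1$ at $m=2$, so $c\,p^{-m\sigma_f}m^{\kappa-1}\le p^{-m(n-s)/d}$ for all $m\ge 2$; for each of the finitely many smaller primes one may take $c_p:=c\,\sup_{m\ge 2}m^{\kappa-1}p^{-m\delta}<\infty$. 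Combining this with the cases $m=0,1$ and with~(\ref{eq:p-i}) proves both assertions.

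\emph{The key inequality.} It remains to prove $\sigma_f\ge(n-s)/d$, and $\kappa=1$ in case of equality. Since $\deg f=d$ we have $f_d\ne 0$, and passing from $f_d$ to $f$ only adjoins lower-degree monomials, which enlarge the Newton polyhedron; hence $\Delta_0(f_d)\subseteq\Delta_0(f)$ and $\sigma_f\ge\sigma_{f_d}$, so it suffices to treat the homogeneous case $f=f_d$, where the claim is that $\tfrac{d}{n-s}(1,\dots,1)\in\Delta_0(f_d)$. The natural approach is through the minimal exponent: as $f_d$ is homogeneous of degree $d\ge2$, the origin is a critical value of $f_d$, so Proposition~\ref{lem:alpha} (equivalently~(\ref{bound:min:exp})) applied to $f_d$ shows that the minimal exponent of $f_d$ at $0$ is at least $(n-s)/d$, and one then wants to bound this minimal exponent above by $\sigma_{f_d}$ — for a polynomial non-degenerate with respect to its Newton polyhedron this upper bound is read off from $\Delta_0$ via the computations underlying \cite{CAN}, and for general $f_d$ one would degenerate to that case using the semicontinuity of the minimal exponent (item (2) of Theorem~E of \cite{MustPopa}). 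Making this comparison between $\sigma_{f_d}$ (hence $\sigma_f$) and the minimal exponent fully precise — in particular across the range $(n-s)/d>1$ not covered by \cite{CMN}, where the minimal exponent exceeds $1$ and lct-type bounds are useless — is the main obstacle; an alternative, more combinatorial route is to show that a hyperplane separating $\tfrac{d}{n-s}(1,\dots,1)$ from $\Delta_0(f_d)$ would force every monomial $x^i$ of $f_d$ to satisfy $\sum_{j\in J}i_j\ge 2$ for some coordinate set $J$ with $|J|<n-s$, hence the coordinate subspace $\{x:x_j=0,\ j\in J\}$, of dimension exceeding $s$, into $\Crit(f_d)$, which is impossible; tracing either argument in the case of equality moreover pins $\tfrac{d}{n-s}(1,\dots,1)$ in the relative interior of a facet of $\Delta_0(f)$, giving $\kappa=1$.
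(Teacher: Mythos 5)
Your high-level structure matches the paper: reduce via the Chinese remainder theorem, dispose of $d=2$ and $m=1$ by the diagonalization and square-free cases, invoke Proposition~\ref{prop:CANsigma} for $m\ge 2$, and reduce everything to the inequality $\sigma_f\ge (n-s)/d$. But the proposal has a genuine gap in exactly the step you call ``the key inequality,'' and your handling of the equality case does not close.

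The paper's entire technical content is Lemma~\ref{lem:CAN}: for $d\ge 3$, $\sigma_f\ge(n-s)/d$, with equality \emph{if and only if} $f-f(0)$ is a smooth degree-$d$ form in some $n-s$ of the variables. You state the inequality but do not prove it. Your first route (bound the minimal exponent of $f_d$ above by $\sigma_{f_d}$) is going in the wrong direction: the standard Newton-polyhedron comparisons give $\lct(f)\le\min(1,\sigma_f)$, and the minimal exponent satisfies $\hat\alpha_f\ge\lct(f)$; neither inequality yields $\sigma_f\ge\hat\alpha_f$, and no result in \cite{CAN} or \cite{MustPopa} quoted here gives the comparison in the range $\hat\alpha_f>1$. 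Your second, combinatorial route is closer in spirit to the paper's Lemma~\ref{s=0}, but the assertion that a separating hyperplane forces a \emph{single} coordinate set $J$ with $|J|<n-s$ and $\sum_{j\in J}i_j\ge 2$ for \emph{every} monomial is precisely the nontrivial combinatorial content; the paper's proof of Lemma~\ref{s=0} requires the careful ordering of the weight vector $k$, the construction of $I_0$, the counting inequality $|I_0|\ge|J|$, and the chain $k_{i_r}>|k_{j_r}|$, none of which your sketch supplies. The reduction from general $s$ to $s=0$ (proof of Lemma~\ref{lem:CAN} for $s>0$ via generic linear sections) is also not addressed.

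The equality case is also mishandled. You assert that $\sigma_f=(n-s)/d$ forces $\kappa=1$ and then try to conclude from Proposition~\ref{prop:CANsigma} with ``$c=1$ for $p$ large by the uniformity in Proposition~\ref{prop:CANsigma}''; but that proposition only gives \emph{some} constant $c$, not $c=1$ for large $p$, so the second assertion of the theorem (the clean bound $E_f(p^m,\xi)\le p^{-m(n-s)/d}$ for large $p$) would not follow from your argument. Moreover your claim $\kappa=1$ in the equality case is not established; in the paper it is not needed at all, because the equality characterization in Lemma~\ref{lem:CAN} shows $f-f(0)=g(x_{i_1},\dots,x_{i_{n-s}})$ with $g$ a smooth form of degree $d$, whence $E_f(N,\xi)=E_g(N,\xi)$ and one applies Igusa's bound from Section~\ref{sec:ev1} directly, which does give the constant~$1$ for large $p$. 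Without that structural consequence of equality, your route does not prove the theorem.
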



\begin{proof}[Proof of Proposition \ref{prop:CANsigma}]
In  \cite{CAN}  it is shown that one can take a constant $c$ so that (\ref{eq:CAN}) holds for all large primes $p$ and al $m\ge 2$. So, there is only left to prove that for each prime $p$ there is
a constant $c_p$ (depending on $p$) such that for each integer $m\ge 2$ one has
\begin{equation}\label{eq:CANp}
E_f(p^m,\xi) \le c_p p^{-m\sigma_f}m^{\kappa-1}.
\end{equation}
Indeed, (\ref{eq:CANp})  is only used for the finitely many remaining primes.
First, if $f$ is non-degenerate w.r.t~$\Delta_0(f)$ we show that $f(0)$ is the only possible critical value of $f$, by induction on $n$. If $n=1$, by the non-degeneracy of $f$, we get that $f$ has no critical point in $\CC^\times$ and we are done.  Now suppose that  $n>1$. Let $f$ be a polynomial in $n$ variables which is  non-degenerate w.r.t~$\Delta_0(f)$. Suppose that $u=(u_1,...,u_n)$ is a critical point of $f$. By the non-degeneracy of $f$ there exists $1\leq j\leq n$ such that $u_j=0$. Without lost of generality we can suppose that $j=n$. We can write $f=f(0)+ \sum_{i=0}^d g_i(x_1,...,x_{n-1})x_n^i$ for some polynomials $g_i$ with furthermore $g_0(0)=0$.  It is sufficient to show that $f(u_1,...,u_n)=f(0)$. Since $u_n=0$, it suffices to show that $g_0(u_1,...,u_{n-1})=0$. By the fact that $f$ is non-degenerate w.r.t~$\Delta_0(f)$ we get that $g_0$ is non-degenerate w.r.t~$\Delta_0(g_0)$. It is clear that $(u_1,...,u_{n-1})$ is a critical point of $g_0$. So, we can use the inductive hypothesis to deduce that $g_0(u_1,...,u_{n-1})=g_0(0)=0$.
Now, since $f$ has no other possible critical value than $f(0)$ and since there exists a toric log resolution of $f-f(0)$ whose numerical properties (in particular its discrepancy numbers) are controlled by $\Delta_0(f)$ (see for example \cite{Varchen}), inequality (\ref{eq:CANp}) follows from Igusa's work in \cite{Igusa3}. Here, we use the following information on the discrepancy numbers coming from the toric log resolution $\pi$ of $f-f(0)$, in relation to $\Delta_0(f)$. If $E$ is an irreducible component of the exceptional locus of $\pi$ and if one writes $N_E$ for the multiplicity of $E$ in the divisor $\pi^*(f-f(0))$  and $\nu_E-1$ for the multiplicity of $E$ in  $\pi^*(dx_1\wedge\ldots\wedge dx_n)$, then one has $\nu_E/N_E\ge \sigma$. Furthermore, any intersection of $\kappa+1$ many such $E$ for which the equality $\nu_E/N_E = \sigma$ holds is empty. Since $f(0)$ is the only critical value of $f$, we are now done by Igusa's work in \cite{Igusa3}.
\end{proof}

The proof of Theorem \ref{prop:CAN} relies on Proposition \ref{prop:CANsigma} and Lemma \ref{lem:CAN}. Note that the following lemmas \ref{lem:CAN} and \ref{s=0} do not require $f$ to be non-degenerate.

\begin{lem}\label{lem:CAN} Let $f$, $n$, $s$, and $d$ be as in the introduction. (In particular, $f$ is allowed to be inhomogeneous and there is no condition on non-degeneracy.) Suppose that $d\geq 3$. Then one has $\sigma_f \ge (n-s)/d$, and, equality holds if and only if there is a smooth form $g$ of degree $d$ in $n-s$ variables  
such that
$$
f(x) - f(0)
=g(x_{i_1},\ldots,x_{i_{n-s}})
$$
for some $i_j$ with $1\leq i_1<i_2<\ldots<i_{n-s}\leq n$.
\end{lem}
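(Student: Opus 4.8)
The plan is to translate the statement $\sigma_f \ge (n-s)/d$ into a purely combinatorial inequality about the Newton polyhedron $\Delta_0(f)$ and then identify the equality case. Recall that by definition $(1/\sigma_f,\ldots,1/\sigma_f)$ lies on a proper face of $\Delta_0(f)$, so $\sigma_f$ is determined by the supporting hyperplane of $\Delta_0(f)$ through the diagonal ray. Equivalently, if for a primitive linear functional $\ell(x)=\sum_j a_j x_j$ with $a_j \ge 0$ we set $m(\ell) := \min\{\ell(i) : i \in \Supp(f)\}$, then $1/\sigma_f = \max_\ell\, m(\ell)/\ell(\mathbf 1)$ where $\mathbf 1 = (1,\ldots,1)$ and the maximum is over all such $\ell$ (this is the standard description of the Newton-polyhedron ``distance''). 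So the inequality $\sigma_f \ge (n-s)/d$ is equivalent to: for every nonzero $\ell$ with nonnegative coefficients, $m(\ell)\cdot (n-s) \le d \cdot \ell(\mathbf 1)$, or, taking $\ell$ to be the one realizing $\sigma_f$, it suffices to produce for the relevant $\ell$ a bound $m(\ell) \le d\,\ell(\mathbf 1)/(n-s)$.

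The key input connecting this to $s$ is the following: since $f_d$ has critical locus of dimension $s$, the ideal generated by $\partial f_d/\partial x_1,\ldots,\partial f_d/\partial x_n$ cuts out a variety of dimension $s$ in $\AA^n$, hence (by the dimension of the tangent cone / a linear-algebra argument) at most $s$ of the variables can be ``missing'' from $f_d$ up to linear change of coordinates — more precisely, $f_d$ cannot be written, after a linear coordinate change, as a polynomial in fewer than $n-s$ variables, since otherwise its critical locus would contain a linear space of dimension $> s$. First I would make this precise: the projectivized critical locus of a form $g_d$ in $k$ variables is nonempty of dimension $\ge k-2$ only in degenerate ways, but the clean statement I want is that the number of variables genuinely appearing (after the best linear change) is at least $n-s$, equivalently $f_d$ restricted to no coordinate subspace of dimension $> s$ is identically... — here I must be a little careful, so the actual argument will go through the support $\Supp(f_d)$ and the monomials on the relevant face $\tau$. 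The monomials of $f$ lying on the face $\tau$ through $(1/\sigma_f)\mathbf 1$ with $\ell$-value $m(\ell)$: these are among the monomials of lowest $\ell$-degree, and I'd argue that the degree-$d$ part $f_d$ contributes monomials to this face, which forces $\ell$-degree $\le$ something controlled by $d$ and by how many variables appear.

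Concretely, the main step: let $\ell$ realize $\sigma_f$, and let $S \subseteq \{1,\ldots,n\}$ be the set of indices $j$ with $a_j > 0$. Monomials supported off $S$ would have $\ell$-value $0 < m(\ell)$, so all of $\Supp(f)$ uses only variables in $S$ with positive $\ell$-weight plus possibly variables outside $S$ freely — but any monomial of $f$ must have $\ell$-value $\ge m(\ell) > 0$, hence must involve a variable in $S$. In particular $f_d$ (which is nonzero) involves only variables such that every monomial hits $S$; then the critical-locus dimension bound forces $|S| \ge n-s$ (if $|S| < n-s$, i.e. $f-f(0)$ lives in the ideal generated by $\le n-s-1$ variables up to the structure, its degree-$d$ part would have critical locus of dimension $> s$, contradiction — this needs the clean lemma that a form all of whose monomials involve one of $k$ given variables has critical locus of dimension $\ge n-k$). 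Then among the degree-$d$ monomials of $f$ there is one, say $x^i$ with $|i| = \sum_j i_j = d$, maximizing... and evaluating $\ell$ on it: $\ell(i) = \sum_{j\in S} a_j i_j \le (\max_j a_j)\cdot d$; meanwhile summing an averaging argument over $j \in S$ gives $m(\ell) \le \ell(i)$ and $\ell(\mathbf 1) = \sum_{j\in S} a_j \ge |S|\cdot (\text{something})$, yielding $m(\ell)/\ell(\mathbf 1) \le d/|S| \le d/(n-s)$, i.e. $\sigma_f \ge (n-s)/d$. The equality case $m(\ell)/\ell(\mathbf 1) = d/(n-s)$ then forces $|S| = n-s$, all $a_j$ equal for $j \in S$, every monomial of $f-f(0)$ of $\ell$-degree exactly $m(\ell)$ — i.e. of total degree exactly $d$ in the variables $\{x_j : j \in S\}$ and not involving the other variables — and in fact all of $f-f(0)$ must be homogeneous of degree $d$ in $(x_j)_{j\in S}$; the non-degeneracy is not needed here but the smoothness of the resulting form $g$ comes from $s = n - |S| = 0$ for that form, i.e. $g := f - f(0)$ viewed in $n-s$ variables is a form whose critical locus has dimension $s - s = 0$, hence smooth. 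Writing $i_1 < \cdots < i_{n-s}$ for the elements of $S$ gives the stated shape; conversely if $f - f(0) = g(x_{i_1},\ldots,x_{i_{n-s}})$ with $g$ a smooth degree-$d$ form, then $\Delta_0(f)$ is the $(n-s)$-variable simplex $\{|i| = d\}$ translated and one computes $\sigma_f = d/(n-s)$ directly, giving equality.

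The main obstacle I anticipate is the clean ``dimension of the critical locus vs.\ number of essential variables'' lemma — specifically proving that if every monomial of the degree-$d$ form $f_d$ involves at least one of $k$ prescribed coordinates, then the critical locus of $f_d$ has dimension $\ge n-k$ (so that $k \ge n - s$). This is where one needs that $f_d$ vanishes to order $\ge 1$ along the codimension-$k$ coordinate subspace $\{x_j = 0 : j \in S\}$, hence so do all its partials that don't differentiate in an $S$-direction; a short argument shows the critical locus contains that subspace (or a large piece of it), but one must handle the partials $\partial f_d/\partial x_j$ for $j \in S$ with care, and this is really the crux linking the combinatorics of $\Supp(f)$ back to the invariant $s$. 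Everything else — the averaging inequality $m(\ell)/\ell(\mathbf 1) \le d/|S|$ and the rigidity in the equality case — is elementary once this structural fact is in hand.
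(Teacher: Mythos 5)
Your reduction of $\sigma_f\ge (n-s)/d$ to the variational description $1/\sigma_f = \max_\ell m(\ell)/\ell(\mathbf 1)$ over nonnegative primitive $\ell$ is a reasonable starting point, but the chain of inequalities you build on it breaks at the crucial link. The ``clean lemma'' you single out as the crux --- that if every monomial of a form $g$ in $n$ variables involves at least one of $k$ prescribed coordinates then $\dim\Crit_g\ge n-k$ --- is simply false. Take $g=x_1x_2x_3$ in $n=3$ variables and $S=\{1\}$, so $k=1$: every monomial involves $x_1$, yet $\Crit_g$ is the union of the three coordinate axes, of dimension $1<n-k=2$. (What \emph{is} true is that $f_d$ cannot be a polynomial in fewer than $n-s$ variables after a \emph{linear} change of coordinates, since then $\Crit_{f_d}$ would contain a translate of the kernel of the projection; but ``every monomial meets $S$'' is much weaker than ``$f_d$ is a polynomial in $|S|$ linear forms,'' as $x_1x_2x_3$ with $S=\{1\}$ shows.) Consequently the inequality $|S|\ge n-s$ you need does not follow, and indeed fails: for $f=x_1x_2x_3$ one has $s=1$, $\sigma_f=1$ realized by $\ell=(1,0,0)$ with $|S|=1<n-s=2$. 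The inequality $\sigma_f\ge 2/3$ is still true here, but not via your route, precisely because $d/|S|=3$ is a much weaker bound than $d/(n-s)=3/2$.

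There is a second, independent gap: even granting $|S|\ge n-s$, the averaging step giving $m(\ell)/\ell(\mathbf 1)\le d/|S|$ is not justified. From $m(\ell)\le\ell(i)\le d\max_j a_j$ and $\ell(\mathbf 1)\ge|S|\min_{j\in S}a_j$ you only get $m(\ell)/\ell(\mathbf 1)\le d(\max a_j)/(|S|\min a_j)$, which collapses to $d/|S|$ only if the weights on $S$ are all equal --- and that equality is essentially what one is trying to prove in the equality case, not something one can assume. The paper sidesteps both issues by first treating $s=0$ via a genuinely different mechanism: assuming a separating hyperplane $H=\{k\cdot a=0\}$ through $(d/n)\mathbf 1$ with $\Supp(f_d)\subset\{k\cdot a\ge 0\}$, it partitions the coordinates into $I$ (where $k_i>0$) and $J$ (where $k_j<0$), produces an explicit coordinate subspace $\{x_i=0:i\notin J\}$ intersected with vanishing loci of partial-coefficient polynomials $g_i$, $h_\ell$, and shows this set sits inside $\Crit_{f_d}$ with dimension $\ge 1$, contradicting $s=0$; this proves $(d/n)\mathbf 1$ is in the \emph{interior} of $\Conv(\Supp(f_d))$, which is much stronger than the averaging bound. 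The $s>0$ case is then reduced to $s=0$ by a generic linear restriction to an $(n-s)$-dimensional subspace (the polynomials $g_C$, $h_C$), a step your plan has no analogue of. I would recommend abandoning the ``every monomial meets $S$'' reduction and instead pursuing the $s=0$ case first, followed by a generic linear section argument.
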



We will first prove Lemma \ref{lem:CAN} in the case that $s=0$, 
using the following lemma. We write $\Conv(\Supp_f)$ for the convex hull of $\Supp_f$ in $\RR^n$.
\begin{lem}\label{s=0} Let $f$, $n$, $s$, and $d$ be as in the introduction. Suppose furthermore that $d\geq 3$, $s=0$ and that $f=f_d$, namely, $f$ is smooth homogeneous of degree at least $3$. Then
$$
\dim(\Conv(\Supp_f))=n-1,
$$
and, the point $(d/n,\ldots,d/n)$ belongs to the interior of $\Conv(\Supp_f)$. In particular, $\sigma_f=n/d$ and $\kappa=1$.
\end{lem}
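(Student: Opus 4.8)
The plan is to deduce all three assertions of the lemma from one family of strict inequalities on $\Supp(f)$, and then to recognise that family as the combinatorial content of GIT-stability of the smooth hypersurface $\{f=0\}$. First note that, $f=f_d$ being homogeneous of degree $d$, every $i\in\Supp(f)$ lies on the hyperplane $H=\{y\in\RR^n:\ y_1+\dots+y_n=d\}$, so $\Conv(\Supp(f))\subseteq H$, the point $(d/n,\dots,d/n)$ lies in $H$, and the asserted ``$\dim\Conv(\Supp(f))=n-1$'' just says $\Conv(\Supp(f))$ is full-dimensional in $H$ (so that ``interior'' in the statement is the relative interior inside $H$). I claim it suffices to prove
\[
(\star)\qquad \max_{i\in\Supp(f)}\langle w,i\rangle\ >\ \tfrac{d}{n}(w_1+\dots+w_n)\qquad\text{for every }w\in\RR^n\text{ not proportional to }(1,\dots,1).
\]
Indeed, applied to $-w$ this also gives $\min_{i\in\Supp(f)}\langle w,i\rangle<\tfrac dn(w_1+\dots+w_n)$; together these place $(d/n,\dots,d/n)$ strictly between the two supporting values of $\Conv(\Supp(f))$ in every direction not parallel to $H$, which forces $\dim\Conv(\Supp(f))=n-1$ and $(d/n,\dots,d/n)\in\operatorname{relint}\Conv(\Supp(f))$. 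Moreover $\Conv(\Supp(f))$ equals the face $\{\sum_j y_j=d\}\cap\Delta_0(f)$ of $\Delta_0(f)$ (any point of $\Delta_0(f)$ of coordinate-sum $d$ is a convex combination of points of $\Supp(f)$), and this is a \emph{proper} face since $\Delta_0(f)$ has coordinate-sum $\ge d$ but not $\equiv d$; hence once $(d/n,\dots,d/n)\in\Conv(\Supp(f))$, the uniqueness clause in the definition of $\sigma_f$ yields $1/\sigma_f=d/n$, i.e.\ $\sigma_f=n/d$.

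To prove $(\star)$, replace $w$ by $w-\tfrac1n(w_1+\dots+w_n)(1,\dots,1)$; this shifts both sides of $(\star)$ by the same amount (as $\langle(1,\dots,1),i\rangle=d$ on $\Supp(f)$), so we may assume $w_1+\dots+w_n=0$ and $w\ne0$, and then $(\star)$ asserts that some monomial of $f$ has strictly positive $w$-weight. Suppose not, so $\langle w,i\rangle\le0$ for all $i\in\Supp(f)$. This is precisely the failure of the Hilbert--Mumford numerical criterion for $\{f=0\}\subset\PP^{n-1}$ under the $\SL_n$-action, hence contradicts the classical fact (Mumford) that a smooth hypersurface of degree $d\ge3$ in $\PP^{n-1}$ is $\SL_n$-stable --- and it is exactly here that $d\ge3$ is used (for $d=2$ the lemma genuinely fails, e.g.\ for $x_1x_2+\dots+x_{n-1}x_n$). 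Rather than only quoting this, I would also spell out the elementary argument: reordering so $w_1\ge\dots\ge w_n$ forces $w_1>0>w_n$; no monomial of $f$ can be supported on the positive-weight coordinates $\{1,\dots,q\}$, so $\{f=0\}$ contains the coordinate subspace $L=\{x_{q+1}=\dots=x_n=0\}\cong\PP^{q-1}$; writing $f=\sum_{k>q}x_kh_k$ with $h_k$ homogeneous of degree $d-1$, the Jacobian criterion along $L$ shows $h_{q+1}|_L,\dots,h_n|_L$ have no common zero on $\PP^{q-1}$, while $\langle w,\cdot\rangle\le0$ forces each monomial of each $h_k|_L$ to have strictly negative weight for the positive weights $(w_1,\dots,w_q)$; an induction on $n$ (base case $n\le2$, where smoothness of a binary form of degree $d\ge3$ confines the minimal and maximal $x_1$-degrees to $\{0,1\}$ and $\{d-1,d\}$, putting $(d/2,d/2)$ in the open segment precisely because $d\ge3$) then produces a singular point of $\{f=0\}$, the desired contradiction.

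The reductions in the first paragraph are routine convex geometry; the real work is $(\star)$, equivalently re-deriving GIT-stability of smooth hypersurfaces of degree $\ge3$, and I expect this to be the main obstacle. All the geometry --- and the use of the hypothesis $d\ge3$ --- is concentrated in turning the hypothetical weight vector $w$ into a singular point of $\{f=0\}$ via the Jacobian criterion; the passage from there back to $\sigma_f$ and to the dimension and interior statements is then a formality.
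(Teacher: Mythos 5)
Your reduction of the lemma to the family of strict inequalities $(\star)$ on $\Supp(f)$ is sound, and your recognition that $(\star)$, after normalising to $\sum_j w_j=0$, is exactly the Hilbert--Mumford criterion for $\SL_n$-stability of $\{f=0\}\subset\PP^{n-1}$ --- which holds by Mumford's classical theorem for smooth hypersurfaces of degree $d\ge 3$ --- gives a correct proof by a genuinely different route from the paper's. The paper argues directly and self-containedly: starting from a hypothetical supporting hyperplane $\{a : k\cdot a=0\}$ through $(d/n,\dots,d/n)$ with $\Supp(f)$ in the half-space $\{k\cdot a\ge 0\}$, it splits indices by the sign of $k_i$ into $I$ and $J$, isolates the monomials $a$ with $a_i=1$ for a single $i\in I$ and otherwise supported on $J$, and uses $s=0$ (via the Jacobian criterion, as you do) to force first $|I_0|\ge |J|$ and then, after ordering, $k_{i_r}>|k_{j_r}|$ for all $r$, contradicting $\sum_{i\in I}k_i=\sum_{j\in J}|k_j|$. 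Both proofs exploit smoothness in the same spirit --- a too-large coordinate subspace would land in $\Crit_{f}$ --- but the paper's argument is elementary bookkeeping, while yours is shorter and conceptual at the cost of quoting a named theorem.

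One caveat on your ``spelled-out elementary'' version of $(\star)$: as written it does not close. You assert that each monomial of $h_k|_L$ has ``strictly negative weight for the positive weights $(w_1,\dots,w_q)$'', but such a monomial $x^a$ is supported on $\{1,\dots,q\}$ where every $w_i>0$, so its weight $\langle (w_1,\dots,w_q),a\rangle$ is strictly \emph{positive}. What $\langle w, x_k x^a\rangle\le 0$ actually yields is the upper bound $\langle (w_1,\dots,w_q),a\rangle\le -w_k$, and the announced induction on $n$ is not set up as a self-contained statement: the $h_k|_L$ are $n-q$ forms of degree $d-1$ in $q$ variables with no common zero on $\PP^{q-1}$, which is not an instance of the lemma being proved. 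Since you also cite Mumford's theorem the overall proof stands, but this backup needs reworking before it counts as an alternative to the citation.
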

\begin{proof}Since $f=f_d$, it is clear that $\dim(\Conv(\Supp_f))\leq n-1$. Suppose now that either $\dim(\Conv(\Supp_f))<n-1$, or, that $(d/n,\ldots,d/n)$ does not belong to the interior of $\Conv(\Supp_f)$. We try to find a contradiction. By our assumptions, there exists a hyperplane $H=\{a\in \RR^n \mid k\cdot a=0\}$ for some $k\in \RR^n\setminus\{0\}$ such that the point 
$(d/n,\ldots,d/n)$ belongs to $H$ and such that $\Supp_f$ belongs to the half space $H_+:= \{a\in \RR^n \mid k\cdot a\ge 0\}$. Let $I$ be the subset of $\{1,\ldots,n\}$  consisting of $i$ with  $k_i>0$ and let $J$ be the subset of $\{1,\ldots,n\}$  consisting of $j$ with  $k_j<0$. Clearly $I$ and $J$ are disjoint. Since $(d/n,\ldots,d/n)$ belongs to $H$, it follows that  $I$ and $J$ are both nonempty and that
\begin{equation}\label{=}
\sum_{i\in I} k_i=\sum_{j\in J} |k_j|.
\end{equation}
Furthermore, the inclusion $\Supp_f\subset H_+$ implies that
\begin{equation}\label{*}
\sum_{i\in I} k_ia_i\geq \sum_{j\in J} |k_j|a_j  \mbox{ for all $a\in \Supp_f$.  }
\end{equation}

%
%

Consider the set
$
\Supp_f^0
$
 consisting of those $a\in \Supp_f$ with moreover $a_i=1$ for some $i\in I$ and $a_{i'}=0$ for all $i'\not\in J\cup \{i\}$.
For $a\in \Supp_f^0$ write $t(a)$ for the unique $i\in I$ with $a_i=1$ and write
$$
I_0:=\{i\in I\mid  \exists a\in \Supp_f^0 \mbox{ with } t(a)=i 
\}.
$$
Clearly we can write
$$
f(x)=\sum_{i\in I_0}x_ig_i(x_j)_{j\in J}+\sum_{a\in \Supp_f\setminus \Supp_f^0
}\beta_{a}x^a
$$
for some polynomials $g_i$ in the variables $(x_j)_{j\in J}$.
Also, the algebraic set
$$
\bigcap_{i\notin J}\{x_i=0 \} \bigcap_{i\in I_0}\{g_i=0\}
$$
in $\AA^n_\CC$ has dimension at least $|J|-|I_0|$  and is contained in $\Crit_f$, the critical locus of $f$. By our smoothness condition $s=0$, this implies
\begin{equation}\label{I0J}
|I_0|\geq |J|.
\end{equation}
Hence, we can write $I_0=\{i_1,\ldots,i_\ell\}$ and $J=\{j_1,\ldots,j_m\}$ with $m\leq \ell$ and with
\begin{equation}\label{kikj}
k_{i_1}\geq k_{i_2}\geq\ldots\geq k_{i_{\ell}} \mbox{ and } |k_{j_1}|\geq |k_{j_2}|\geq\ldots\geq |k_{j_{m}}|.
\end{equation}
To prove the lemma it is now sufficient to show that
\begin{equation}\label{kirjr}
k_{i_{r}}>|k_{j_r}| \mbox{ for all $r$ with } 1\leq r\leq m.
\end{equation}
Indeed,  (\ref{kirjr}) gives a contradiction with (\ref{=}).
To prove (\ref{kirjr}), we suppose that there is  $r_0$ with $1\leq r_0\leq m$ and with
\begin{equation}\label{kr0r1}
k_{i_{r_0}}\le |k_{j_{r_0}}|
\end{equation}
and we need to find a contradiction.
If there exists $a\in \Supp_f^0$ such that $a_{j_{r_1}}\ge 1$ for some $r_1\le r_0$, then let $a$ be such an element and let $t$ be $t(a)$; otherwise,  let $a$ be arbitrary and put $t=0$.
We will now show that $t<r_0$. If $t=0$ this is clear, so, suppose that $t>0$.
Since $d\ge 3$ and $a\in \Supp_f^0$, 
we find by (\ref{*})  and (\ref{kikj}) that 
\begin{equation}\label{kit}
k_{i_{t}} =   \sum_{i\in I} k_ia_i \geq \sum_{j\in J} a_{j}|k_{j}|  >  |k_{j_{r_1}}|\ge |k_{j_{r_0}}|.
\end{equation}
Together with (\ref{kikj}) and (\ref{kr0r1}), 
this implies that $t< r_0$ as desired. 
We can thus write
\begin{equation}\label{fr}
f=\sum_{1\leq \ell \le r_0-1}x_{i_\ell}h_\ell(x_{j})_{j\in J}+\sum_{a\in A
}\beta_ax^a
\end{equation}
with
$$
A=\{a\in \Supp_f\mid  \sum_{i\notin\{j_1,\ldots,j_{r_0}\}}a_i\geq 2\}
$$
and with some polynomials $h_\ell$ in the variables $(x_j)_{j\in J}$.
It follows that the algebraic set
$$
\bigcap_{i\notin \{j_1,\ldots,j_{r_0}\}}\{x_i=0\}\bigcap_{1\leq \ell\leq r_0-1}\{h_{\ell}=0\}
$$ 
has dimension at least $1$ and is contained in $\Crit_f$, again a contradiction with our smoothness assumption $s=0$. So, relation (\ref{kirjr}) follows and the lemma is proved.
\end{proof}

The case of Lemma \ref{lem:CAN} with $s=0$ is derived from Lemma \ref{s=0}, as follows.
\begin{proof}[Proof of Lemma \ref{lem:CAN} with $s=0$]
Let $f$ be of degree $d\geq 3$ and with $s=0$. We need to show that $\sigma_f \ge n/d$, and, that $\sigma_f = n/d$ if and only if $f=f_d$.
Since $f_d$ is smooth, Lemma \ref{s=0} implies that $(d/n,\ldots,d/n)$ belongs to $\Delta_0(f)$, 
and hence,  $\sigma_f\geq n/d$. Suppose now that 
$f\neq f_d$. Then there exists $a\in \Supp_f$ with $
\sum_{i=1}^n a_i<d$.  Hence, by Lemma \ref{s=0} and the definition of $\Delta_0(f)$,  there exists $\varepsilon>0$ such that
$$
\{x\in\RR^n\mid ||x-(d/n,\ldots,d/n)||\leq \epsilon 
\} \subset \Delta_0(f).
$$
Therefore it is clear that $\sigma_f>n/d$. 
This finishes the proof of Lemma \ref{lem:CAN} with $s=0$.
\end{proof}
\begin{proof}[Proof of lemma \ref{lem:CAN} with $s>0$] To prove the lemma with $s>0$ we may suppose that 
\begin{equation}\label{eq:sf}
\sigma_f\leq (n-s)/d.
\end{equation}
 By the definition of $\sigma_f$ we have
\begin{equation}\label{eq:sf2}
\min_{a\in \Conv(\Supp_f)}\max(a)=1/\sigma_f, 
\end{equation}
where $\max(a)=\max_{1\leq i\leq n}\{a_i\}$ and where $\Conv(\Supp_f)$ is the convex hull of $\Supp_f$.
We set
$$
k:=\min_{\max(a)=1/\sigma_f} \# \{i|a_i=1/\sigma_f\},
$$
where the minimum is taken over $a\in \Conv(\Supp_f)$.
Let $a\in \Conv(\Supp_f)$ realize this minimum, namely, with  $\# \{i\mid a_i=1/\sigma\}=k$ and with $\max(a)=1/\sigma_f$.
We may suppose that
\begin{equation*}\label{eq:maxa}
a_1=\ldots =a_k=1/\sigma_f \mbox{ and $a_i<1/\sigma_f$ if } i>k.
\end{equation*}
Let $b\in \Conv(\Supp_f)$ be such that $\max(b)=1/\sigma_f$.  Then,  for each $\lambda\in[0,1]$, the point $c_\lambda:=\lambda a+(1-\lambda)b$ lies in $\Conv(\Supp_f)$. When $\lambda$ is sufficiently close to $1$, then we have $c_{\lambda,i}<1/\sigma_f$ for all $i>k$, and, the definition of $k$ implies that $b_i=1/\sigma_f$ for all $1\leq i\leq k$. By the same reasoning, for each  $b\in \Conv(\Supp_f)$ one has $b_i\geq 1/\sigma_f$ for some $i$ with $1\leq i\leq k$. The definition of $k$ and (\ref{eq:sf2}) also tell us that $k/\sigma_f\leq d$, and thus we find
\begin{equation}\label{eq:sf3}
k\leq n-s
\end{equation}
from  (\ref{eq:sf}). For any tuple of complex numbers $C=(c_{i,j})_{1\leq i,j\leq s} 
$
we consider the polynomial
$$
g_C=f(x_1,\ldots,x_{n-s}, x_{n-s+1}+\sum_{1\leq j\leq n-s}c_{1,j}x_j,\ldots,x_n+\sum_{1\leq j\leq n-s}c_{s,j}x_j).
$$
For a generic choice of $C$ one has $\Supp_f\subset \Supp_{g_C}$. Furthermore, we show that for a generic choice of $C$ the polynomial
$$
h_C=f_d(x_1,\ldots,x_{n-s}, \sum_{1\leq j\leq n-s}c_{1,j}x_j,\ldots,\sum_{1\leq j\leq n-s}c_{s,j}x_j)
$$
is smooth homogeneous in $n-s$ variables, where $f_d$ is the degree $d$ homogeneous part of $f$.
For a generic choice of $e_{n}=(e_{n,i})_{i < n}$ one has
$$
\dim(\Sing(f_{d,e_{n}}))=n-s,
$$
where
$$
f_{d,e_{n}}(x_1,\ldots,x_{n-1}) :=f_{d}(x_1,\ldots,x_{n-1}, \sum_{i=1}^{n-1}e_{n,i}x_i),
$$
considered as a polynomial in $n-1$ variables $x_i$ with $i<n$.
We repeat this argument to see that for a generic choice  $E=(e_{n-s+1},\ldots,e_{n})$ with $e_{j}=(e_{j,i})_{i<j}$ one has that
$$
\dim(\Sing(f_d|_{V_E}))=n-s,
$$
where
$$
V_E=\{x|x_{n}=\sum_{i<n}e_{n,i}x_i, \ldots, x_{n-s+1}=\sum_{i\leq n-s}e_{n-s+1,i}x_i\}.
$$
It is clear that the smoothness of $f_d|_{V_E}$ for generic $E$ corresponds to the smoothness of $h_C$ for generic $C$.
Let us fix such a choice of $C$ with all these properties, namely, that $h_C$ is smooth and that $\Supp_f\subset \Supp_{g_C}$. If $a\in \Supp_{g_C}$, it is easy to see that
$a_i\geq b_i$ for all $i$ with $1\leq i\leq n-s$ and for some $b\in \Supp_f$. Hence, $\sigma_{g_C}\leq \sigma_f$, by the definition of $k$ and our chosen ordering of the coordinates.
On the other hand, from $\Supp_f\subset \Supp_{g_C}$ it follows that
$\sigma_{g_C}\geq \sigma_f$, and hence, we have
$$
\sigma_{g_C}=\sigma_f.
$$
Let $\pi$ be the coordinate projection from $\RR^n$ to $\RR^{n-s}$. Then, for any $e=(e_j)_{j=1,\ldots,s}$, consider the polynomial
\begin{align*}
g_{C,e}(x_1,\ldots,x_{n-s}) := g_C(x_1,\ldots,x_{n-s}, e_1,\ldots,e_s).
\end{align*}
Then, for generic choice of $e$, we have
$$
\Supp_{g_{C,e}} =  \pi(\Supp_{g_C}).
$$
Let us fix such a choice of $e$. It is clear that
$$
\sigma_{g_{C,e}}=\sigma_{g_C},
$$
by the definition of $k$ and our ordering of the coordinates. Note that the highest degree homogeneous part of $g_{C,e}$ equals $h_C$, which is smooth. Thus, we can use Lemma \ref{lem:CAN} with $s=0$ (which is already proved) for  $g_{C,e}$. So, we find
$$
\sigma_{g_{C,e}}=(n-s)/d, \mbox{ and, } g_{C,e}-g_{C,e}(0)=h_C.
$$
Hence,
$$
\pi(\Supp_f)\subset\pi(\Supp_{g_C})\subset\{a\in \RR^{n-s}|a_1+\ldots+a_{n-s}=d\}.
$$
This holds if and only if $f-f(0)=f_d=h(x_1,\ldots,x_{n-s})$ for some polynomial $h$, which is smooth homogeneous since $\dim(\Crit_{f_d})=s$. This finishes the proof of the lemma \ref{lem:CAN}.
\end{proof}

We are now ready to prove Theorem \ref{prop:CAN}.

\begin{proof}[Proof of Theorem \ref{prop:CAN}]
The case that $d=2$ is treated in Section \ref{sec:ev2.bis}.
Hence, we may suppose that $d\geq 3$. By Proposition \ref{prop:CANsigma}, there exists a constant $c_2$ such that for all integers $m>1$, all primes $p$ and all primitive $p^m$-th roots of unity $\xi$ we have
\begin{equation}\label{m>1}
E_f(p^m,\xi) \le c_2p^{-m\sigma_f}m^{\kappa-1}.
\end{equation}
By Lemma \ref{lem:CAN} we have $\sigma_f\geq \frac{n-s}{d}$. If $\sigma_f>\frac{n-s}{d}$, then we have $\frac{n-s}{d}<\frac{n-s}{2}$, from using $d\ge 3$ and $s<n$.
Conjecture \ref{con:SfN} for this case follows by combining (\ref{eq:p-i}) and (\ref{m>1}) with the squarefree case from Section \ref{sec:ev3} .
If $\sigma_f=\frac{n-s}{d}$, we use Lemma \ref{lem:CAN} again  to see that $f=g_d+f(0)$ for a smooth form $g_d$ of degree $d$ in $n-s$ variables. Conjecture \ref{con:SfN} for this case follows by Igusa's case from Section \ref{sec:ev1}.
\end{proof}



\begin{rem}\label{rem:gen}
If $f$ is weighted homogeneous, then the notion of non-degenericity with respect to $\Delta_0(f)$ is generic, but otherwise the genericity is more subtle, by the difference between `critical points' and `singular points'.
In fact, whether or not the notion of non-degenericity with respect to $\Delta_0(f)$ is generic depends on $\Supp_f$.
When  $\Supp_f$ is contained in a hyperplane which does not contain the origin $0$ and has a normal vector with non-negative coordinates (see \cite[Section 2.2]{CAN}), then the condition of non-degeneracy on the coefficients $\beta_i$ is generic within this support, that is, for any $\gamma$ outside a Zariski closed subset of $\CC^{\Supp_f}$, the polynomial $\sum_{i\in \Supp_f} \gamma_i x^i$ is non-degenerate with respect to its Newton polyhedron at zero. This hyperplane condition generalizes the case of weighted homogeneous polynomials. However, in the general case, this genericity may be lost since we imposed conditions on critical points of $f_\tau$ instead of on singular points as is done more traditionally in \cite{Kuchn}, \cite{Varchen}. Especially for $\tau = \Delta_0(f)$ this makes a difference when the mentioned hyperplane condition is not met. For instance, polynomials of the form $f(x)=ax^3+by^3+cxy$ for nonzero $a$, $b$, and $c$ are never non-degenerate in our sense, the problem being with $\tau = \Delta_0(f)$.
\end{rem}

\section{The four variable case}\label{sec:n=4}
In this final section we prove Conjecture $1$ when $n\le 4$ (Theorem \ref{allp}), and a slightly stronger result when furthermore $d=3$ and $s=0$ (Proposition \ref{prop:3}).  
\begin{thm}\label{allp}
Let $f$, $n$, $s,$ and $d$ be as in the introduction and suppose that  
$n\le 4$. Then Conjecture \ref{con1} holds for $f$.
\end{thm}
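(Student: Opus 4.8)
The plan is to peel off all the easy cases, reducing to the single case $n=4$, $s=0$, $d=3$, and then to analyse that case by localising at the critical points of $f$. By the Chinese Remainder Theorem (the Remark after Conjecture \ref{con1}) it suffices to bound $E_f(p^m,\xi)$ for prime powers. The case $d=2$ is done in Section \ref{sec:ev2}; the case $(n-s)/d\le 1$ --- hence $n\le 3$, and, for $n=4$, the subcases $d\ge 4$ and $d=3,\,s\ge 1$ --- follows from \cite{CMN}; and for $1\le m\le d+1=4$ the bound is Theorem \ref{lem:d+2} for large $p$, the remaining finitely many primes being covered by Igusa's per-prime estimates from \cite{Igusa3}. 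So I am reduced to $n=4$, $s=0$, $d=3$, $m\ge 5$: here $f_3$ is a projectively smooth cubic form in four variables, and the goal is $E_f(p^m,\xi)\ll_{f,\varepsilon}p^{-4m/3+m\varepsilon}$.

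Now fix a prime $p$ that is large relative to $f$. Since $s=0$ the partials of $f_3$ have only the common zero $0$, so by B\'ezout $f$ has at most $16$ critical points $P_1,\dots,P_k$, all isolated, and for $m\ge 2$ only the residue discs around the $P_j$ contribute: $E_f(p^m,\xi)\le\sum_j E_f^{P_j}(p^m,\xi)$ in the notation of (\ref{EPf}) (a residue disc centred where $\nabla f\not\equiv 0\bmod p$ contributes $0$). Put $\mu_j:=\operatorname{mult}_{P_j}(f-f(P_j))\in\{2,3\}$. If $\mu_j=3$ the $2$-jet of $f$ at $P_j$ vanishes, so after translation $f-f(P_j)=f_3$; substituting $x=P_j+py$ and rescaling the character identifies $E_f^{P_j}(p^m,\xi)$ with $p^{-4}E_{f_3}(p^{m-3},\psi')$ for a primitive $\psi'$, and Igusa's smooth homogeneous bound (Section \ref{sec:ev1}) gives $E_{f_3}(p^{m-3},\psi')\le c_p\,p^{-4(m-3)/3}$, hence $E_f^{P_j}(p^m,\xi)\le c_p\,p^{-4m/3}$. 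Crucially this uses only Igusa's smooth homogeneous bound, which is available for every prime.

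For the points with $\mu_j=2$ the $2$-jet of $f$ at $P_j$ is a nonzero quadratic form $Q_j$ of rank $r_j\in\{1,2,3,4\}$, with $f-f(P_j)=Q_j+f_3$. An affine $\ZZ_p$-coordinate change (completing the square in $r_j$ of the variables; valid for $p$ odd with $Q_j$ nondegenerate mod $p$) puts $f-f(P_j)$ in separated form $Q_j'\oplus \tilde h$, with $Q_j'$ nondegenerate in $r_j$ variables and $\tilde h$ a polynomial in the other $4-r_j\le 3$ variables; the $Q_j'$-directions contribute a product of Gauss sums of total exponent $r_j/2$, leaving a residual sum in at most three variables. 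Since the critical points of $\tilde h$ correspond to critical points of $f$ (whose local minimal exponent is $\ge\hat\alpha_f$), Thom--Sebastiani additivity of the minimal exponent together with Proposition \ref{lem:alpha} ($\hat\alpha_f\ge(n-s)/d=4/3$) gives $\hat\alpha_{\tilde h}\ge 4/3-r_j/2$; feeding $\tilde h$ into the (local form of the) bounds of \cite{CMN}, which supply exponent $\min(\hat\alpha_{\tilde h},1)$, the total exponent is at least $r_j/2+\min(\hat\alpha_{\tilde h},1)$, which one checks to be $\ge 4/3$ in each case $r_j\in\{1,2,3,4\}$ (using $\hat\alpha_{\tilde h}\ge 4/3-r_j/2$ when $r_j\le 2$, and $\hat\alpha_{\tilde h}>0$ together with $r_j/2\ge 3/2$ when $r_j\ge 3$). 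Together with the $\mu_j=3$ case this settles all large primes.

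The remaining, and I expect the hardest, point is the finitely many ``bad'' primes: $p=2$, and the primes dividing the discriminants and resultants modulo which the critical points of $f$ collide, some $Q_j$ degenerates, or $f_3$ acquires singularities --- so that the critical-point bookkeeping and the square-completion step above both fail. For each such $p$ one has to fall back on Igusa's per-prime bound attached to some $\ZZ_p$-log resolution of $f$, for which one needs $\min\{\nu_i/N_i:N_i\ge 2\}\ge 4/3$; but naively blowing up the critical points and then resolving the now-singular nearby fibres can produce candidate poles strictly between $-4/3$ and $-1$, so one must either choose a resolution tailored to these degenerate reductions (or invoke Veys-type results that such candidate poles are not actual poles), or else run the transfer principle for bounds of \cite{CGH4} from a suitable $\FF_p\llparenthesis t\rrparenthesis$-analogue of the estimates above. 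Securing the bound uniformly over these bad primes is where the real work lies.
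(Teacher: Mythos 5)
Your overall strategy matches the paper's: reduce to $n=4$, $s=0$, $d=3$, localize at the finitely many critical points (Lemma \ref{criticalpoint}), and at each degenerate critical point separate the quadratic jet from the rest and invoke the non-rational-singularity bounds of \cite{CMN} for the residual, with Thom--Sebastiani bookkeeping for the exponents. The reduction via Theorem \ref{lem:d+2} to $m\ge 5$ is superfluous (the paper's Proposition \ref{prop:3} already covers $m\ge 2$, and actually proves the stronger bound with $\hat\alpha_f$ in place of $4/3$), but that is harmless.

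The genuine gap is your assertion that the separation $f-f(P_j)=Q'_j\oplus\tilde h$ is produced by an ``affine $\ZZ_p$-coordinate change.'' It is not. If $f_3$ has a cross monomial such as $x_1x_2x_3$, completing the square in $x_1$ against $a_1x_1^2+(\partial f_3/\partial x_1)x_1+\cdots$ uses the substitution $x_1\mapsto x_1-\tfrac{1}{2a_1}Q_1(x_2,\dots,x_4)$ with $Q_1$ quadratic; this is polynomial, not affine, and iterating over the $r_j$ quadratic directions produces a formal coordinate change whose residual $\tilde h$ is a power series, not a polynomial of degree $3$ in $4-r_j$ variables. This is not cosmetic, because you then feed $\tilde h$ into \cite{CMN} (a result about polynomials) and compute $\hat\alpha_{\tilde h}$ by Thom--Sebastiani for the \emph{series}, not for any polynomial you have in hand. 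The paper handles exactly this issue: Weierstrass preparation over $\ZZ[1/M]\llbracket x,y,z,w\rrbracket$ gives the factorization $f=u\cdot(x^2+xh+g)$ with a unit $u$, and then the general theory of local zeta functions from \cite{DenefBour} is invoked to replace $h,g$ by truncations $h_D,g_D$ of sufficiently high degree $D$ before applying \cite{CMN}; this truncation step is what licenses treating the residual as a polynomial with the right $\hat\alpha$. Your argument needs an analogue of it. (The paper also peels off one quadratic direction at a time, re-checking multiplicity after each step, rather than all $r_j$ at once; that sidesteps your rank casework but is otherwise the same idea.)

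Your closing remarks on bad primes are fair: the paper's multiplicity-$3$ and empty-critical-locus cases cover all primes, but the Weierstrass step is over $\ZZ[1/M]$ and thus the multiplicity-$2$ case as written is only for $p>M$, so flagging this as the delicate point is appropriate.
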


The proof of Theorem \ref{allp} relies on Weierstrass preparation, properties of $\hat\alpha_f$ based on results on minimal exponents from \cite{MustPopa}, bounds from \cite{CMN}, and Igusa's results as exposed in \cite{DenefBour}.
%
The following auxiliary lemma is well known, see for example the final inequality of \cite{Heath-B-cube-free}, where furthermore an explicit upper bound on the number of critical points is obtained.
\begin{lem}\label{criticalpoint}
Suppose that $g=g_0+...+g_d$ is a polynomial in $\CC[x_1,...,x_n]$ of degree $d$ and with
$\dim(\Crit_{g_d})=0$, where $g_i$ is the degree $i$ homogeneous part of $g$, and where $\Crit_{g_d}$ is the critical locus of $g_d:\CC^n\to\CC$.
Then $\Crit_g$ is a finite set, with $\Crit_g$ the critical locus of $g$.
\end{lem}
\begin{proof}
This is shown by homogenizing $g$ as in the reasoning towards the final inequality of \cite{Heath-B-cube-free}, where it is even shown that
$\# \Crit_g \le (d-1)^n$, by an application of B\'ezout's theorem.
\end{proof}

\begin{proof}[Proof of Theorem \ref{allp}]
Suppose that $n\le 4$.
If $d=2$ or $(n-s)/d\leq 1$, then Conjecture \ref{con:SfN} follows by the arguments in Sections \ref{sec:ev2} and \ref{sec:ev2.bis}.
Hence, we may concentrate on the case that $d=3$ and $s=0$.

Write $\Crit_f$ for the critical locus of $f$ in $\AA^n_\ZZ$.
By Remark \ref{rem:(5)}, Section \ref{sec:ev3} and Lemma \ref{criticalpoint} it is sufficient to show that there exists a constant $c>0$ such that for all primes $p$, all integers $m\ge 2$, all points $P$ in $\Crit_f(\FF_p)$, and all primitive $p^m$-th roots of unity $\xi$ we have
\begin{equation}\label{m>1, p>M}
E^{P}_f(p^m,\xi)\leq c (p^m)^{-n/d +\varepsilon},
\end{equation}
where  $E^{P}_f$ is as in (\ref{EPf}). Indeed, for any point $P$ outside $\Crit_f(\FF_p)$, one has $E^{P}_f(p^m,\xi)=0$ as soon as $p$ is large and $m\ge 2$ (and similarly for small $p$ and $m$ large enough), and, for large $p$ one has $\#\Crit_f(\FF_p)\le \#\Crit_f(\CC)$.

If there exists a point $a\in \Crit_f(\CC)$ such that the multiplicity of $f$ at $a$ is $3$ then such $a$ is unique, and, up to an affine coordinate change over $\CC$ putting $a$ in the origin, one has $f=f_3$.  Hence, in this case that there exists a point $a\in \Crit_f(\CC)$ with  multiplicity $3$, for each $p$ one either has that $a$ belongs to $\ZZ_p^n$, and then one has $E_f(p^m,\xi)=E_{f_3}(p^m,\xi)$ for all $m\geq 1$ and all $\xi$, or, one has that $\Crit_f(\ZZ_p)$ is empty. In the former case one is done by the treatment for $f=f_d$ with $s=0$ of Section \ref{sec:ev1}, and, in the latter case one proceeds as in the above discussion for $P$ outside $\Crit_f(\FF_p)$.
Hence, we may assume that $f$ has multiplicity $2$ at each point in $\Crit_f(\CC)$.

We focus on (\ref{m>1, p>M}) with $P=\{0\}$, and, we assume that $f$ has multiplicity $2$ at zero. (For general  $P$ in $\Crit_f(\FF_p^n)$, one works similarly.)
Up to using a transformation as in Lemma \ref{lem:T}, we may suppose that $f_2$ is diagonal.
We will perform a change of variables and reduce our question to bounding $E^P_F$ instead of $E_f^P$, for some $F$ of the form $x^2+G(y,z,w)$ with $G$ a polynomial, where Weierstrass preparation will be key, and, cutting-off a power series at some high degree $D$.
We first reason for large prime numbers $p$.
Let $M$ be one of the nonzero coefficients of $f_2$, say, the term $Mx^2$ appears in $f_2(x,y,z,w)$. By Weierstrass preparation for $f/M$ in the ring $R[[x,y,z,w]]$ with $R$ being the ring  $\ZZ[1/M]$, we may assume that $f(x,y,z,w)/M$ equals $u(x,y,z,w)(x^2 + xh(y,z,w) +    g(y,z,w))$  for some $g,h$ in $R[[y,z,w]]$ and some unit $u$ in $R[[x,y,z,w]]$ (see e.g.~example 4.4(1) of \cite{CLip} for Weierstrass preparation over 
$R$). Now up to a transformation with new variable $x-h/2$ instead of $x$, we may assume that $h=0$. By the general theory of local zeta functions of  \cite{DenefBour}, and up to changing the primitive root $\xi$, the unit $u$ plays no role since its reduction modulo $p$ is constant (recall that $p$ is assumed to be large). That is, for large $p$ one has that $E_f^P(p^m,\xi) = E_{f/u}^P(p^m,\xi')$ for all large primes, each $m$, each $\xi$, and a corresponding $\xi'$, and with the obvious meaning for $E_{f/u}$.
By Theorem E(3) and Proposition 6.6(1) of \cite{MustPopa}, if $g_D$ is the polynomial which coincides with $g$ up to some large degree $D$, then the minimal exponent at zero of $g$ can differ no more than $3/D$ from the minimal exponent of $g_D$.
Furthermore, as soon as $D$ is large enough, the sums  $E_{g_D}^P(p^m,\xi)$ are independent of the choice of $D$ for each $m\ge 0$ and each $\xi$ and each large prime $p$. This is so because all the data that goes into a computation of $E_{g_D}^P(p^m,\xi)$ via cell decomposition coincides with the corresponding computation of $E^P_{g}(p^m,\xi)$ if $D$ is large enough, yielding $E_{g_D}^P(p^m,\xi)=E^P_{g}(p^m,\xi)$ for all large primes $p$, all $m$, and all $\xi$. Indeed, a cell decomposition as in Section 6 of \cite{CLip} only uses finitely many terms of the occurring analytic functions, and, such cell decomposition together with Proposition (1.4.4)  of \cite{DenefBour} allow one to compute $E_{g_D}^P(p^m,\xi)$ and $E^P_{g}(p^m,\xi)$. Hence, it is sufficient to bound $E_{F}^P$ for $F$ being $x^2 + G(y,z,w)$ where $G$ is the polynomial $g_D$ for some large $D$, say, with $3/D<\varepsilon/3$. We may even suppose that the coefficients of $G$ lie in $\ZZ$, after multiplying with a power of $M$ and changing $\xi$ correspondingly.
Now, if $G$ has multiplicity $3$ or more at $P$, then $\hat \alpha_{G}\le 1$ by Theorem E(3) of \cite{MustPopa}. Thus, we are done by \cite[Theorem 1.5]{CMN} for $G$ (see the discussion in Section \ref{sec:ev2}), by the direct relations between $E_{G}^P(p^m,\xi)$, $E_{F}^P(p^m,\xi)$ and $E_{f}^P(p^m,\xi)$, and by
$$
\hat \alpha_{F} = 1/2 +  \hat \alpha_{G}\ge \hat \alpha_{f} -\varepsilon/3\ge n/d -\varepsilon/3 = 4/3-\varepsilon/3,
$$
which follows from  (\ref{bound:min:exp}) and Example 6.7 of \cite{MustPopa}.

 If $G$ has multiplicity $2$ at $P$, then we may repeat the above reduction and reduce to the case that $G(y,z,w)=y^2+H(z,w)$ for some polynomial $H$ in two variables. Since now automatically $\hat \alpha_{H}\le 1$, we are similarly done by \cite[Theorem 1.5]{CMN}. This proves (\ref{m>1, p>M})  for all large primes $p$.

Small primes are treated similarly by Weierstrass preparation over $\ZZ_p$ rather than over $R$ and by a similar reasoning as for Lemma \ref{lem:T}. This finishes the proof of the theorem.
\end{proof}

In fact, the above proof of Theorem \ref{allp} gives a bit more in the case
$n\leq 4$, $d\le 3$, and $s=0$, as follows.

 \begin{prop}\label{prop:3}
If $n\leq 4$, $d\le 3$, and $s=0$, then the bounds from (\ref{bound:SfNfull}) hold for $f$, that is, for all $\varepsilon>0$ there is a constant $c$ such that
\begin{equation}\label{bound:3}
E_f(p^m,\xi) \le c (p^m)^{-\hat\alpha_f+\varepsilon} \mbox{ for all primes $p$, all $m>1$ and all $\xi$},
\end{equation}
with $\hat{\alpha}_f$ as in Section \ref{subs:minimal:exp}. Moreover, still under the conditions $n\leq 4$, $d\le 3$, and $s=0$, the value $\hat{\alpha}_f$ is equal to the motivic oscillation index of $f$ as given in \cite{CMN}.
Hence, $\hat{\alpha}_f$  is the optimal exponent in (\ref{bound:3}). 
\end{prop}
The optimality of the exponent  $\hat{\alpha}_f$ in (\ref{bound:3}) means that there is a constant $c_0>0$ such that for infinitely many primes $p$ and arbitrarily large $m$ one has
\begin{equation}\label{lowerb}
c_0(p^m)^{-\hat{\alpha}_f} \le E_f(p^m,\xi) \mbox{ for some $\xi$.}
\end{equation}
The motivic oscillation index of $f$ as given in \cite{CMN} (which corresponds to the one from \cite{Cigumodp} but without the negative sign) is the optimal exponent of $p^{-m}$ in (\ref{bound:3}), see Section 3.4 of \cite{CMN}; therefore, the equality of  $\hat{\alpha}_f$ with the motivic oscillation index is a useful property and implies (\ref{lowerb}).

\begin{proof}[Proof of Proposition \ref{prop:3}]
Under the conditions $n\leq 4$, $d\le 3$, and $s=0$, the proof of Theorem \ref{allp}  in fact shows the strengthening of (\ref{m>1, p>M}) with $\hat\alpha_f$ in the exponent instead of $n/d$. In each case treated in that proof, we observe that $\hat\alpha_f$ is equal to the motivic oscillation index of $f$. To this end we use that for any polynomial $G$ over $\ZZ$, the inequality $\hat \alpha_{G}\le 1$ implies that $\hat \alpha_{G}$ equals the motivic oscillation index of $G$ by \cite[Section 3.4]{CMN}. In the case that $f=f_3$, one uses that  $\hat\alpha_f=4/3$ (by \cite[Theorem~E (3)]{MustPopa}), and, that $4/3$ is optimal in the exponent in (\ref{m>1, p>M}) (see Section \ref{sec:ev1}), and hence, equal to the motivic oscillation index of $f$.
 \end{proof}

%

\begin{rem}\label{rem:largeprimes-n-d}
For each of the above cases in which Conjecture \ref{con1} is shown in this paper, one moreover sees that, after excluding a finite set $S$ (which depends on $f$) of prime divisors of $N$, the implied constant can be taken depending only on $d$ and $n$  (and on $\varepsilon$). The only case where this is not directly clear is for the case with $(n-s)/d\le 1$, since its treatment in \cite{CMN} uses a chosen log resolution which depends on $f$. However, the complexity  of such log resolutions (and of the corresponding proof in \cite{CMN}) remains bounded when $n$ and $d$ are fixed. Indeed, one first takes a log resolution of a generic polynomial of degree $d$ in $n$ variables; this then yields a log resolution for polynomials whose coefficients lie in a dense Zariski open subset $U$ of the parameter space. One proceeds similarly for a generic polynomial with parameters in the complement of the dense open $U$.

Note that the exclusion of a finite list of prime divisors of $N$ is necessary, as can be seen when one replaces a polynomial $f$ by $pf$ for some prime $p$. It is not clear at the moment whether the finite set $S$ has to depend fully on $f$ in general, or, just on $f_d$.
\end{rem}

 \bibliographystyle{amsplain}
\bibliography{anbib}

%
%
%

\end{document}